\newif\ifsiaga
  \newtheorem{example}{Example}
  \newtheorem{definition}{Definition}
  \newtheorem{lemma}{Lemma}
  \newtheorem{proposition}{Proposition}
\newcommand{\FormatAuthor}[3]{
\begin{tabular}{c}
#1 \\ {\small\texttt{#2}} \\ {\small #3}
\end{tabular}
}
\title{On cycles of pairing-friendly elliptic curves\thanks{Submitted
    to the editors October 2, 2018.
\funding{Alessandro Chiesa was supported by the UC Berkeley Center for Long-Term Cybersecurity. Lynn Chua was supported by a UC Berkeley University Fellowship.}}}
\author{Alessandro Chiesa\thanks{Department of Electrical Engineering
    and Computer Science, UC Berkeley
    (\email{alexch@berkeley.edu}, \email{chualynn@berkeley.edu})}
\and Lynn Chua\footnotemark[2] \and Matthew Weidner\thanks{Computer Laboratory, Cambridge (\email{malw2@cam.ac.uk})}}
\title{On cycles of pairing-friendly elliptic curves}
\author{
\begin{tabular}[h!]{ccc}
\FormatAuthor{Alessandro Chiesa}{alexch@berkeley.edu}{UC Berkeley}
 & \FormatAuthor{Lynn Chua}{chualynn@berkeley.edu}{UC Berkeley}
 & \FormatAuthor{Matthew Weidner}{malw2@cam.ac.uk}{Cambridge} \\
&&
\end{tabular}
}
\begin{document}
\maketitle

\begin{abstract}
A cycle of elliptic curves is a list of elliptic curves over finite
fields such that the number of points on one curve is equal to the
size of the field of definition of the next, in a cyclic way. We study
cycles of elliptic curves in which every curve is
pairing-friendly. These have recently found notable applications in
pairing-based cryptography, for instance in improving the scalability
of distributed ledger technologies. 

We construct a new cycle of length 4 consisting of MNT curves, and characterize all the possibilities for cycles consisting of MNT curves. We rule out cycles of length 2 for particular choices of small embedding degrees. We show that long cycles cannot be constructed from families of curves with the same complex multiplication discriminant, and that cycles of composite order elliptic curves cannot exist. We show that there are no cycles consisting of curves from only the Freeman or Barreto--Naehrig families.
  
\ifsiaga
\else
\bigskip
\noindent{\small\textbf{Keywords\/}: pairing-friendly elliptic curves; cycles of elliptic curves; cryptography}
\fi
\end{abstract}

\ifsiaga
\begin{keywords}
elliptic curves, Weil pairing, cryptography
\end{keywords}

\begin{AMS}
14H52, 14G50, 11T71
\end{AMS}
\fi

\ifsiaga
\else
\clearpage
\tableofcontents
\clearpage
\fi

\section{Introduction}

A cycle of elliptic curves is a list of elliptic curves defined over finite fields in which the number of points on one curve equals the size of the field of definition of the next, cyclically.

\begin{definition}
\label{def:cycle}
An \emph{$m$-cycle of elliptic curves} is a list of $m$ distinct elliptic curves $E_1/\mathbb{F}_{q_1}$,$\ldots$, $E_m/\mathbb{F}_{q_m}$, where $q_1,\ldots, q_m$ are prime, such that the numbers of points on these curves satisfy 
\begin{align} \label{eqn:cycle-condition} 
\#E_1(\mathbb{F}_{q_1}) = q_2\,, \ldots,
\#E_i(\mathbb{F}_{q_i}) = q_{i+1}\,,\ldots,
\#E_m(\mathbb{F}_{q_m}) = q_1\,. \end{align}
\end{definition}

This notion was introduced in \cite{ss} with the name of \emph{aliquot cycles}. The case of 2-cycles of ordinary curves, also called \textit{amicable pairs}, was introduced in the context of primality proving by \cite{preda, dualelliptic} under the equivalent notion of \emph{dual elliptic primes} (see \cref{appendix:dualelliptic}).

Silverman and Stange \cite{ss} showed that cycles of arbitrary lengths exist, and gave conjectural estimates, for any elliptic curve $E/\mathbb{Q}$, of the number of prime pairs $(q_1, q_2)$ such that reducing $E$ modulo $q_1$ and $q_2$ gives an amicable pair. Cycles of elliptic curves were further studied in \cite{reciprocity, jones, parks, parks2}, and some of these works refined and proved on average the conjectured estimates, showing that amicable pairs are asymptotically common.

In \cite{snarks} the notion of cycles of elliptic curves was extended for applications to pairing-based cryptography.

\begin{definition}
\label{def:cycle-pf}
A \emph{pairing-friendly $m$-cycle of elliptic curves} is an $m$-cycle
such that every elliptic curve in the cycle is ordinary and has a small embedding degree.
\end{definition}

Pairing-friendly cycles were used in \cite{snarks} to achieve recursive composition of \emph{zkSNARKs} (also known as \emph{proof carrying data}). A zkSNARK is a cryptographic scheme that allows one party (the prover) to convince another party (the verifier) that the prover knows a certain secret, via a short proof that is cheap to verify and reveals no information about the secret. Efficient zkSNARK constructions are obtained via pairing-friendly elliptic curves, and the cycle condition in \cref{eqn:cycle-condition} enables their recursive composition, while avoiding expensive modular arithmetic across fields of different characteristics. (See \cite{snarks} for details.)

Practitioners are interested in recursive composition of zkSNARKs, because it can be used to boost the scalability of distributed ledger technologies \cite{Breitman17}. For example, there are commercial efforts in this space whose core technology \emph{is} recursive composition \cite{coda}, and such technology thus rests on properties of cycles of pairing-friendly elliptic curves.

This motivates the question: \emph{what types of pairing-friendly cycles exist?}

A pairing-friendly $2$-cycle can be obtained from pairing-friendly prime-order curves of embedding degrees $4$ and $6$ \cite{kt, snarks}. Beyond this, there are \emph{no} other known constructions, and very little is known about pairing-friendly cycles. Indeed, requiring a small embedding degree as in \cref{def:cycle-pf} is a strong restriction and techniques used in previous work to study aliquot cycles do not seem to apply to pairing-friendly cycles.

This is unfortunate because the aforementioned MNT cycle is not ideal for applications: its unequal embedding degrees make one curve less secure than the other and, moreover, the fact that both embedding degrees are so small implies that using the cycle at high security levels is inefficient. It would be desirable, e.g., to have a $2$-cycle with embedding degrees $(12,12)$ or $(20,20)$ and, more generally, to understand this mathematical object better.

\subsection{Overview of results}
\label{sec:overview-of-results}

The stark difference in the current understanding of pairing-friendly cycles when compared to aliquot cycles, as well as applications to pairing-friendly cryptography in the real world, motivates a systematic study of pairing-friendly cycles. In this paper we initiate such a study, and our main results are the following.
\begin{enumerate}

\item
Prior to this work, the \emph{only} construction of pairing-friendly cycles was a 2-cycle from a family of curves called \emph{MNT curves}, named after Miyaji, Nakabayashi, and Takano \cite{mnt}. A natural question to ask is: can one construct other cycles consisting of MNT curves? In this work, we construct a new pairing-friendly cycle of length 4 using MNT curves. We also characterize \emph{all} the possibilities for cycles consisting of MNT curves, showing that any MNT cycle must have length 2 or 4, and that the curves must have embedding degrees alternating between 4 and 6. See \cref{section:MNT} for details.

\item
We then study \emph{arbitrary} pairing-friendly $2$-cycles (not derived from a particular family). We prove that $2$-cycles of elliptic curves with embedding degrees $(5, 10)$, $(8, 8)$, or $(12, 12)$ do \emph{not} exist. The technique that we use relies on the fact that the cyclotomic polynomials of these embedding degrees have degree $4$. In particular, we do not know how to extend this result to any other embedding degrees $(k_1,k_2)$. See \cref{section:degree4} for details.

\item
We move to study pairing-friendly cycles of arbitrary length. One strategy to construct cycles could be to pick a parametrized family of elliptic curves and try to construct cycles consisting of curves from the same family (like for MNT curves). What must the parameters of the family satisfy for such constructions to be possible? We prove that if the curves have the same discriminant for complex multiplication $D>3$, then we cannot construct cycles of length greater than 2 (\cref{section:discriminant}). This implies that to construct elliptic curve cycles, we must use curves from families of varying discriminants.
 
\item
So far we discussed cycles consisting of elliptic curves of \emph{prime} order. What if we relax the definition of cycles to allow \emph{composite} (non-prime) order elliptic curves in which the number of points on one curve is a multiple of (but not necessarily equal to) the size of the field of definition of the next? We prove that composite-order cycles \emph{cannot exist} (see \cref{section:cofactor}). This is a strong restriction as it implies that we must construct cycles using pairing-friendly elliptic curves of \emph{prime} order. Unfortunately, there are very few constructions of families of such curves in the literature, \emph{regardless of cycles}.

\item Lastly, we study the other known families of pairing-friendly elliptic curves of prime order (apart from MNT curves): the Freeman curves and the Barreto--Naehrig curves. We prove that cycles within each of these families do not exist (\cref{section:other}). This means that, if one wants to obtain cycles from curve families, one must consider combinations of current families (or study future constructions of prime-order elliptic curves).

\end{enumerate}
Overall, cycles of pairing-friendly elliptic curves seem much harder to understand, and to construct, than cycles of arbitrary elliptic curves. While our results have for the most part established limitations of pairing-friendly cycles, our outlook is optimistic. Our work demonstrates that studying pairing-friendly cycles is tractable and, moreover, points the way to concrete research questions that could lead to more tools for studying these cycles. We thus conclude the introduction with a selection of open problems.

\subsection{Open problems}
\label{sec:open-problems}

\begin{enumerate}

\item Do there exist cycles consisting of elliptic curves with the \emph{same} embedding degree? The varying embedding degrees in current constructions of cycles is inconvenient because, in practice, curves in the cycle have different security levels.

\item Can we construct cycles of embedding degrees greater than 6? All known pairing-friendly cycles involve embedding degrees at most $6$, which means that it is inefficient to use such cycles at high security levels (e.g., 128 bits of security). It would be desirable to construct, or rule out, cycles of higher embedding degrees (say, 20).
  
\item In particular, can we construct 2-cycles of higher embedding degrees?  Our technique for ruling out pairs with embedding degrees $(5, 10)$, $(8, 8)$, or $(12, 12)$ sheds some light on other pairs $(k_1, k_2)$ for which $\Phi_{k_1}(x) = \Phi_{k_2}(-x)$, but it does not seem to extend to the case $\deg \Phi_{k_1}(x) > 4$. We believe that it would be especially interesting to study pairs with embedding degrees $(16, 16)$, which have cyclotomic polynomial $x^8 + 1$.

\item Do there exist cycles consisting of elliptic curves with the same discriminant and the same embedding degrees? Our work demonstrates that sharing the same discriminant is already quite limiting, and it would be interesting to understand how this requirement interacts with that of sharing the same embedding degree.

\item Are there cycles from combinations of MNT, Freeman, and Barreto--Naehrig curves? Our preliminary investigations via Gr\"obner bases suggest small cycles are unlikely, but the question remains open for arbitrary-length cycles.
  
\end{enumerate}

\section{Preliminaries}

\subsection{Elliptic curves and pairings}

Let $E$ be an elliptic curve over a finite field $\mathbb{F}_q$, where
$q$ is a prime. We denote this by $E/\mathbb{F}_q$, and we denote by
$E(\mathbb{F}_q)$ the group of points of $E$ over $\mathbb{F}_q$, with
order $n=\#E(\mathbb{F}_q)$. The \emph{trace} of $E/\mathbb{F}_q$ is
$t=q+1-n$. By Hasse's theorem \cite[Theorem V.1.1]{silverman}, $t$
satisfies $|t| \leq 2\sqrt{q}$. We say that $E$
is \emph{supersingular} if $t\equiv 0 \pmod{q}$, otherwise $E$ is \emph{ordinary}.

The \emph{endomorphism ring} $\mbox{End}(E)$ of $E$ consists of morphisms from $E$ to itself that are also group homomorphisms on its points. If $E$ is supersingular, then $\mbox{End}(E)$ is an order in a quaternion algebra. If $E$ is ordinary, then $\mbox{End}(E)$ is an order in an imaginary quadratic field $\mathbb{Q}(\sqrt{-D})$, for some positive squarefree integer $D$. We call $D$ the \emph{discriminant}, and we say that $E$ has \emph{complex multiplication} in $\mathbb{Q}(\sqrt{-D})$.\footnote{Some works use the convention that $D$ is negative. Throughout this work we take $D$ to be positive.}

Let $r\geq 2$ be an integer relatively prime to $q$. We denote the $r$-torsion points of $E$ by $E[r]$, and we denote the group of $r$-th roots of unity in the algebraic closure of $\mathbb{F}_q$ by $\mu_r$. The \emph{Weil pairing} is a bilinear non-degenerate map 
\begin{equation}
e_r \colon E[r] \times E[r] \rightarrow \mu_r\,.
\end{equation}

The \emph{embedding degree} with respect to $r$ is the smallest integer $k$ such that $r$ divides $q^k - 1$. In the case of prime-order curves, if $r=n$ we simply say that $E$ has embedding degree $k$.

The Weil pairing was first used in cryptography to reduce the discrete logarithm problem on $E[r]$ to a discrete logarithm problem in $\mu_r$, which is contained in $\mathbb{F}_{q^k}^*$ \cite{mov,fr}. Subsequently, starting with the work of \cite{BonehF03,Joux04}, the Weil pairing was used to achieve numerous cryptographic capabilities. For security, it is necessary to choose the embedding degree $k$ such that the discrete logarithm problem in $\mathbb{F}_{q^k}^*$ is computationally infeasible. On the other hand, the embedding degree cannot be too large, or the computation of the Weil pairing (which grows linearly in $k$) would not be efficient enough for cryptographic applications.

We say that an elliptic curve $E/\mathbb{F}_q$ is \emph{pairing-friendly} if $E(\mathbb{F}_q)$ has a large prime-order subgroup, and if the embedding degree is small (see \cite{taxonomy} for a more precise definition). A random elliptic curve has a large embedding degree and thus is not pairing-friendly. Constructing pairing-friendly curves with specified parameters is a difficult problem with strong practical motivations that has been extensively studied. It was shown in \cite{mov} that supersingular elliptic curves can have embedding degree at most $6$, and if the characteristic of $q$ is not $2$ or $3$, the embedding degree is at most $3$. As we are interested in large values of $q$ and higher values of $k$ for applications in cryptography, we focus on ordinary elliptic curves.

The known methods to construct ordinary pairing-friendly curves proceed by first finding parameters $q,r,t,k$ such that there exists an elliptic curve $E/\mathbb{F}_q$ with trace $t$, a prime-order subgroup of size $r$, and embedding degree $k$. The \emph{complex multiplication method} is then used to find the equation of the curve. This works if the \emph{CM equation} $4q-t^2 = Dy^2$ has a solution with $y\in\mathbb{Z}$ and small positive discriminant $D\in\mathbb{Z}$. Indeed, state-of-the-art algorithms run in time $O(D\,\mbox{polylog}\,D)$ and are only feasible for $D$ of size up to $10^{16}$ \cite{cm}.

It is useful to view the condition on the embedding degree via cyclotomic polynomials. Let $\Phi_m$ be the $m$-th cyclotomic polynomial (the minimal polynomial over the rationals of an irreducible $m$-th root of unity). It is known that (see for example \cite{washington})
\begin{equation}
\label{eq:cyclotomic}
x^m - 1 = \prod_{d|m} \Phi_d(x)\,.
\end{equation}

\begin{lemma}
\label{lem:k-phi}
Let $E/\mathbb{F}_q$ have prime order $n$. Then $E$ has embedding degree $k$ if and only if $k$ is minimal such that $n$ divides $\Phi_k(q)$.
\end{lemma}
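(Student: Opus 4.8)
The statement to prove is \cref{lem:k-phi}: for an elliptic curve $E/\mathbb{F}_q$ of prime order $n$, the embedding degree equals the minimal $k$ with $n \mid \Phi_k(q)$. The plan is to unfold the definition of embedding degree, which says $k$ is minimal with $n \mid q^k - 1$, and to translate the divisibility $n \mid q^k - 1$ into a statement about cyclotomic polynomials using the factorization $x^m - 1 = \prod_{d \mid m} \Phi_d(x)$ from \cref{eq:cyclotomic}. The core claim will be: $n \mid q^k - 1$ if and only if $n \mid \Phi_d(q)$ for some $d \mid k$. Given that equivalence, if $k_0$ is the embedding degree then $n \mid q^{k_0}-1$, so $n \mid \Phi_d(q)$ for some $d \mid k_0$, hence $d \leq k_0$; but $\Phi_d(q) \mid q^d - 1$ forces $n \mid q^d - 1$, so minimality of the embedding degree gives $d \geq k_0$, whence $d = k_0$ and $n \mid \Phi_{k_0}(q)$. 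Conversely, if $k_1$ is minimal with $n \mid \Phi_{k_1}(q)$, then $n \mid q^{k_1} - 1$, so $k_0 \leq k_1$; and from the previous paragraph $n \mid \Phi_{k_0}(q)$, so $k_1 \leq k_0$, giving $k_0 = k_1$.

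**The key divisibility.** The one direction ($n \mid \Phi_d(q)$ for some $d\mid k$ implies $n\mid q^k-1$) is immediate since $\Phi_d(q) \mid q^d - 1 \mid q^k - 1$ whenever $d \mid k$. The interesting direction is: $n \mid q^k - 1$ implies $n \mid \Phi_d(q)$ for some $d \mid k$. Since $n$ is prime, $\mathbb{Z}/n\mathbb{Z}$ is a field, and $n \mid q^k - 1$ means $\gcd(q,n)=1$ (as $n$ is prime and $n \nmid q$, since $q$ is prime and $q \neq n$ — here one should note $n = \#E(\mathbb{F}_q) \neq q$ by Hasse, so $q$ and $n$ are distinct primes) and the image $\bar q$ of $q$ in $(\mathbb{Z}/n\mathbb{Z})^\times$ satisfies $\bar q^k = 1$. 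Reducing \cref{eq:cyclotomic} modulo $n$ gives $\bar q^k - 1 = \prod_{d\mid k}\overline{\Phi_d(q)}$ in the field $\mathbb{Z}/n\mathbb{Z}$; since the left side is $0$ and the field has no zero divisors, $\overline{\Phi_d(q)} = 0$ for some $d \mid k$, i.e. $n \mid \Phi_d(q)$.

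**Expected obstacle.** The argument is essentially bookkeeping; the only subtlety is ensuring $\gcd(q,n) = 1$ so that $q$ is invertible mod $n$ and the notion of multiplicative order makes sense — this is where one uses that $n$ is prime together with $q \ne n$ (which follows since $n = q + 1 - t$ with $|t| \le 2\sqrt q$ forces $t = 1$ for $n = q$, and even then one must double-check; more simply, $E$ of prime order $n$ with embedding degree defined requires $r = n$ coprime to $q$, which holds automatically for $q$ prime unless $n = q$, a case easily excluded). I expect no real difficulty here, and the proof should be only a few lines.
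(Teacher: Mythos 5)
Your proof is correct and follows the same route as the paper: both reduce the embedding-degree condition $n \mid q^k - 1$ to divisibility of a cyclotomic value via the factorization $x^k - 1 = \prod_{d \mid k} \Phi_d(x)$. The only difference is that the paper defers the key equivalence to a cited lemma of Washington, whereas you prove it directly using primality of $n$ (so that $\mathbb{Z}/n\mathbb{Z}$ is an integral domain) and handle the minimality bookkeeping explicitly — which correctly sidesteps the classical pitfall that $n \mid \Phi_k(q)$ alone need not force the order of $q$ modulo $n$ to equal $k$.
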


\begin{proof}
The condition that $k$ is the embedding degree implies that $k$ is minimal such that $q^k \equiv 1\pmod{n}$. Using basic results on cyclotomic polynomials (see \cite[Lemma 2.9]{washington}), this is equivalent to the condition that $n|\Phi_k(q)$.
\end{proof}

This can be converted into a result relating $n$ to the trace $t$. 

\begin{lemma}[\cite{bls}]
\label{lem:phi-t-1}
$E/\mathbb{F}_q$ has embedding degree $k$ if and only if $n|\Phi_k(t-1)$ and $n\nmid \Phi_i(t-1)$ for all $0<i<k$.
\end{lemma}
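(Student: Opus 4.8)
The plan is to deduce this directly from \cref{lem:k-phi} by a change of variable modulo $n$. The crucial elementary observation is that, since $n = q + 1 - t$ by the definition of the trace, we have $q \equiv t-1 \pmod{n}$. Because each cyclotomic polynomial $\Phi_i$ has integer coefficients, congruent arguments are sent to congruent values, so $\Phi_i(q) \equiv \Phi_i(t-1) \pmod{n}$ for every positive integer $i$. In particular, for each $i$ the divisibility $n \mid \Phi_i(q)$ holds if and only if $n \mid \Phi_i(t-1)$.

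Next I would invoke \cref{lem:k-phi}, which states that $E$ has embedding degree $k$ precisely when $k$ is the least positive integer with $n \mid \Phi_k(q)$. Feeding in the equivalence above, the condition ``$k$ least with $n \mid \Phi_k(q)$'' translates verbatim into ``$n \mid \Phi_k(t-1)$ and $n \nmid \Phi_i(t-1)$ for all $0 < i < k$'', which is exactly the claimed characterization. Thus the proof is essentially a single substitution once \cref{lem:k-phi} is available.

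There is no substantive obstacle here; the only point deserving a word of care is the implicit hypothesis that the embedding degree exists at all, i.e.\ that $\gcd(n,q)=1$. Since $n$ and $q$ are both prime, this can fail only if $n = q$, in which case $t = 1$ and the notion of embedding degree is vacuous, so we may assume $n \neq q$, matching the premise of \cref{lem:k-phi}. With that noted, combining the reduction $q \equiv t-1 \pmod{n}$ with \cref{lem:k-phi} completes the argument.
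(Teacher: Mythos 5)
Your proof is correct. The paper itself supplies no proof of this lemma---it only cites \cite{bls} and remarks that \cref{lem:k-phi} ``can be converted into a result relating $n$ to the trace $t$''---and your argument (the reduction $q \equiv t-1 \pmod{n}$ from $n = q+1-t$, the fact that integer-coefficient polynomials respect congruences, and then a direct appeal to \cref{lem:k-phi}) is exactly the standard conversion being alluded to. Your side remark about the degenerate case $n=q$ (the anomalous case $t=1$) is a reasonable point of care and does not affect the argument.
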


\subsection{Families of pairing-friendly elliptic curves}
\label{section:families}

We consider families of pairing-friendly elliptic curves with a fixed embedding degree, whose parameters are defined by polynomials. These are useful for generating curves for applications, where curves of arbitrary size are desired. Each family is parametrized by polynomials $(q_k(x), n_k(x), t_k(x))$, representing the field of definition, number of rational points, and trace respectively, where $k$ is the embedding degree. These have to satisfy that $n_k(x)=q_k(x)+1-t_k(x)$, $n_k(x)$ divides $\Phi_k(t_k(x)-1)$, and there must be infinitely many integer solutions $(x,y)$ to the CM equation $4q_k(x)-t_k(x)^2 = Dy^2$, for some small positive discriminant $D\in\mathbb{Z}$.

Miyaji, Nakabayashi, and Takano \cite{mnt} characterized all families of ordinary prime-order elliptic curves with embedding degrees $k=3,4,6$. For these embedding degrees, the cyclotomic polynomial is quadratic, and the CM equation can be transformed into a generalized Pell equation. These families are parametrized by the polynomials in \cref{table:mnt}. We refer to elliptic curves belonging to the MNT families in \cref{table:mnt} as \emph{MNT curves}. 

\begin{table}[h]
\caption{MNT curves.}
\label{table:mnt}
\begin{center}
\begin{tabular}{|c|c|c|c|}
\hline
$k$ & $q_k(x)$ & $n_k(x)$ & $t_k(x)$ \\ \hline
$3$ & $12x^2 -1$ & $12x^2- 6x +1$ & $6x -1$  \\ \hline
$4$ & $x^2 + x + 1$ & $x^2+2x+2,\,x^2+1$ & $-x,\,x+1$ \\ \hline
$6$ & $4x^2 + 1$ & $4x^2+ 2x +1$ & $-2x +1$  \\\hline
\end{tabular}
\end{center}
\end{table}

For other embedding degrees, there is no analogous characterization of all elliptic curves with a given embedding degree. Moreover, there is currently no method to construct families of prime-order elliptic curves of arbitrary embedding degrees. (If we allow for composite orders, there are algorithms to construct elliptic curves of arbitrary embedding degrees \cite{cp, dem}.) There are two other constructions of prime-order families, stated below.

Freeman \cite{freeman10} has constructed a family of prime-order elliptic curves with $k=10$, which is parametrized by the following polynomials:
\begin{subequations}
\label{eqn:freeman}
\begin{align}
q_{10}(x) &= 25x^4 + 25x^3 + 25x^2 + 10x + 3\,,\\
n_{10}(x) &= 25x^4 + 25x^3 + 15x^2 + 5x + 1\,,\\
t_{10}(x) &= 10x^2 + 5x + 3\,.
\end{align}
\end{subequations}

Barreto and Naehrig \cite{bn} have another construction with $k=12$, parametrized by
\begin{subequations}
\label{eqn:bn}
\begin{align} 
q_{12}(x) &= 36x^4 + 36x^3 + 24x^2 + 6x+1\,,\\
n_{12}(x) &= 36x^4 + 36x^3 + 18x^2 + 6x+1\,,\\
t_{12}(x) &= 6x^2 + 1\,.
\end{align}
\end{subequations}

Other constructions of pairing-friendly elliptic curves have \emph{composite} orders. These include the families of Brezing and Weng \cite{brezingweng} and of Barreto, Lynn, and Scott \cite{bls}.

\section{Cycles of pairing-friendly elliptic curves}

In this paper we study cycles of pairing-friendly elliptic curves. This notion was introduced in \cite{snarks} for applications in cryptography. We re-state \cref{def:cycle} below.

\begin{definition}
\label{def:cycle1}
An \emph{$m$-cycle of elliptic curves} is a list of $m$ distinct elliptic curves $E_1/\mathbb{F}_{q_1}$,$\ldots$, $E_m/\mathbb{F}_{q_m}$, where $q_1,\ldots, q_m$ are prime, such that the numbers of points on these curves satisfy 
\begin{align} 
\#E_1(\mathbb{F}_{q_1}) = q_2\,, \ldots,
\#E_i(\mathbb{F}_{q_i}) = q_{i+1}\,,\ldots,
\#E_m(\mathbb{F}_{q_m}) = q_1\,. \end{align}
\end{definition}

Cryptographic applications require curves in the cycle to have small embedding degree.

\begin{definition}
\label{def:cycle2}
A \emph{$(k_1,\dots,k_m)$-cycle} is an $m$-cycle of distinct ordinary
elliptic curves $E_1/\mathbb{F}_{q_1}$, $\ldots$, $E_m/\mathbb{F}_{q_m}$ such that $E_i/\mathbb{F}_{q_i}$ has embedding degree $k_i$, for each $i=1,\ldots,m$. A $(k_1,\ldots,k_m)$-cycle is \emph{pairing-friendly} if all the $k_i$'s are small (recall \cref{def:cycle-pf}).
\end{definition}

An $m$-cycle is a special case of a $(k_1,\ldots,k_m)$-cycle where the $k_i$'s are arbitrary positive integers (or possibly infinity).

If we require that $q_1,\ldots,q_m$ are \emph{distinct} primes, \cref{def:cycle1} is equivalent to the notion of \emph{aliquot cycles} for elliptic curves $E/\mathbb{Q}$ by Silverman and Stange \cite{ss}. An aliquot $m$-cycle for $E/\mathbb{Q}$ is a sequence of distinct primes $(q_1,\ldots,q_m)$ such that $E$ has good reduction at each prime and, if we denote the reduction of $E$ at $q_i$ by $\tilde{E}_{q_i}$, then
\begin{align}
\#\tilde{E}_{q_1}(\mathbb{F}_{q_1}) = q_2\,, \ldots,
\#\tilde{E}_{q_i}(\mathbb{F}_{q_{i}}) = q_{i+1}\,,\ldots,
\#\tilde{E}_{q_m}(\mathbb{F}_{q_m}) = q_1\,.
\end{align}

Given an aliquot $m$-cycle, we can construct an $m$-cycle of elliptic curves by setting $E_i := \tilde{E}_{q_i}$ for each $i$. Conversely, given an $m$-cycle where $q_1,\ldots,q_m$ are distinct, we can construct a curve $E/\mathbb{Q}$ by computing its coefficients via the Chinese Remainder Theorem in such a way that $E$'s reduction at each $q_i$ is $E_i$.

It is known that cycles of arbitrary lengths exist, based just on the Hasse bound and the fact that every trace in the Hasse bound is realized by an elliptic curve \cite{deuring}.

\begin{proposition}[{\cite[Theorem 5.1]{ss}}]
For every $m \geq 1$ there exists an elliptic curve $E/\mathbb{Q}$ with an aliquot $m$-cycle.
\end{proposition}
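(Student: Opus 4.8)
The plan is to first build an $m$-cycle of elliptic curves over finite fields by taking the primes $q_1,\dots,q_m$ very close together, and then to glue the curves into a single curve over $\mathbb{Q}$ via the Chinese Remainder Theorem. If $q_1<q_2<\dots<q_m$ are primes with $q_{i+1}=q_i+1-t_i$ (indices cyclic, $q_{m+1}:=q_1$), then the traces automatically satisfy $t_1+\dots+t_m=m$, and the only genuine constraint is the Hasse bound $|t_i|\le 2\sqrt{q_i}$. When all the $q_i$ lie in an interval of length at most $\sqrt{N}$ with $N:=q_1$ large, we get $|t_i|=|q_{i+1}-q_i-1|\le\sqrt{N}+1<2\sqrt{q_i}$, so the Hasse bound holds for every $i$.

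Hence the first step is, given $m$, to find a window $[X/2,X]$ (for $X$ large) containing $m$ primes within distance $\sqrt{X}$ of each other. I would obtain this by averaging rather than by any theorem on primes in short intervals: partition $[X/2,X]$ into about $\sqrt{X}/2$ subintervals of length $\sqrt{X}$; by the prime number theorem (or merely Chebyshev's bound $\pi(X)\gg X/\log X$) these subintervals contain $\gg X/\log X$ primes altogether, so by pigeonhole one of them contains $\gg\sqrt{X}/\log X$ primes, which is more than $m$ once $X$ is large enough. Relabelling $m$ of those as $q_1<\dots<q_m$ gives $q_i\ge X/2$ and $q_m-q_1\le\sqrt{X}$, so the Hasse estimate of the previous paragraph applies.

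Next I would apply Deuring's theorem \cite{deuring}: for each $i$ there is an elliptic curve $E_i/\mathbb{F}_{q_i}$ of trace $t_i:=q_i+1-q_{i+1}$, so $\#E_i(\mathbb{F}_{q_i})=q_{i+1}$ cyclically, and since the $q_i$ are distinct primes the $E_i$ are distinct; thus $E_1,\dots,E_m$ form an $m$-cycle. To descend to $\mathbb{Q}$, write each $E_i$ in short Weierstrass form $y^2=x^3+a_ix+b_i$ (legitimate since $q_i>3$), use CRT to pick $A,B\in\mathbb{Z}$ with $A\equiv a_i$, $B\equiv b_i\pmod{q_i}$ for all $i$, and put $E\colon y^2=x^3+Ax+B$. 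Since each $E_i$ is nonsingular, $q_i\nmid 4A^3+27B^2$, hence $4A^3+27B^2\ne 0$ and $E/\mathbb{Q}$ is an elliptic curve with good reduction at every $q_i$ and $\tilde{E}_{q_i}=E_i$; therefore $(q_1,\dots,q_m)$ is an aliquot $m$-cycle for $E$.

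The one place calling for care is the first step: it is tempting, but unnecessary (and unknown), to demand a prime in every interval of length $\sqrt{N}$, whereas the averaged statement above suffices and is elementary. The remaining points are routine: taking $X$ large keeps all $q_i>3$ (so the short Weierstrass form and the nonsingularity argument are valid) and keeps $|t_i|\ll 2\sqrt{q_i}$, so we stay away from the boundary cases of Deuring's theorem and may in fact take every $E_i$ ordinary.
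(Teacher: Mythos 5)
Your proof is correct and follows essentially the same route the paper indicates: the paper states this result by citation to \cite[Theorem 5.1]{ss} and sketches exactly the ingredients you use, namely the Hasse bound, Deuring's theorem that every trace in the Hasse interval is realized over a prime field \cite{deuring}, and the Chinese Remainder Theorem gluing described in the paper's discussion relating $m$-cycles to aliquot cycles. The one step you spell out in more detail---the averaging/pigeonhole argument producing $m$ primes within $\sqrt{X}$ of one another, which keeps all traces inside the Hasse interval---is handled soundly and is the standard way to make the argument unconditional.
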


However, the foregoing result does not take into account the embedding degrees of the curves. In particular, it is not known if pairing-friendly cycles of arbitrary lengths exist.

The focus of this paper is the study of \emph{pairing-friendly} cycles of elliptic curves. This is a significantly more restrictive notion than the aliquot cycles introduced in \cite{ss}, since a random elliptic curve would not have a small embedding degree. Moreover, there are only few known families of prime-order elliptic curves with small embedding degrees (see \cref{section:families} for a list of all such families). Even without the condition that the curves form a cycle, it is already a difficult problem to construct pairing-friendly elliptic curves of prime order.

We list below a few observations that we will use in this paper. First, the lemma below implies that to construct cycles of elliptic curves for applications (where the size of the finite fields tend to be large), we need only consider \emph{ordinary} elliptic curves.

\begin{lemma}
Let $E_1/\mathbb{F}_{q_1}$,$\ldots$, $E_m/\mathbb{F}_{q_m}$ be an $m$-cycle of elliptic curves, where $q_1,\ldots,q_m \geq 5$ are prime. Then all the curves must be ordinary elliptic curves.
\end{lemma}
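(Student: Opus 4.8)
The plan is to chase the supersingularity condition around the cycle and derive a contradiction from the Hasse bound, using the hypothesis $q_i \geq 5$ in an essential way. Recall that a curve $E_i/\mathbb{F}_{q_i}$ with trace $t_i$ is supersingular exactly when $q_i \mid t_i$; combined with the Hasse bound $|t_i| \leq 2\sqrt{q_i}$, a supersingular curve over a prime field $\mathbb{F}_{q_i}$ with $q_i \geq 5$ must have $t_i = 0$, since the only multiple of $q_i$ in the interval $[-2\sqrt{q_i}, 2\sqrt{q_i}]$ is $0$ once $2\sqrt{q_i} < q_i$, i.e. once $q_i > 4$. Hence for such primes, supersingular is equivalent to $t_i = 0$, equivalently $\#E_i(\mathbb{F}_{q_i}) = q_i + 1$.

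The main step is then a counting/parity argument around the cycle. Suppose toward a contradiction that some curve in the cycle, say $E_1$, is supersingular; I claim this forces \emph{every} curve to be supersingular, and then derives a contradiction. First I would argue propagation: if $E_i$ is supersingular then $q_{i+1} = \#E_i(\mathbb{F}_{q_i}) = q_i + 1$. Since $q_i \geq 5$ is an odd prime, $q_i + 1$ is even and greater than $2$, hence \emph{not prime} — contradicting the requirement in \cref{def:cycle1} that all $q_j$ be prime. This already shows no curve in the cycle can be supersingular, so all curves are ordinary. (One should double-check the small cases: $q_i \geq 5$ rules out $q_i = 2, 3$; for $q_i = 2$ or $3$ one could have a supersingular curve with $t_i \neq 0$ but $q_i \mid t_i$, e.g. $t_i = \pm q_i$, so the hypothesis $q_i \geq 5$ is exactly what is needed to pin $t_i = 0$.)

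So the argument is short and the only real content is: (a) for primes $q \geq 5$, supersingular $\Leftrightarrow t = 0 \Leftrightarrow \#E(\mathbb{F}_q) = q+1$, via Hasse; and (b) $q+1$ is a composite even number for any odd prime $q \geq 5$, which is incompatible with the next field size being prime. I expect step (a) to be completely routine (it is a standard fact and follows immediately from the Hasse bound stated in the excerpt), and step (b) is elementary arithmetic. The only subtlety to be careful about is making sure the definition of supersingular being used ($t \equiv 0 \pmod q$) is the one in the excerpt, and that I invoke the Hasse bound correctly to conclude $t = 0$ rather than merely $|t|$ small.

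I would write it up as follows: pick any $i$ and suppose $E_i$ is supersingular. By definition $q_i \mid t_i$, and by Hasse $|t_i| \leq 2\sqrt{q_i} < q_i$ since $q_i \geq 5$; therefore $t_i = 0$ and $q_{i+1} = \#E_i(\mathbb{F}_{q_i}) = q_i + 1 - t_i = q_i + 1$. But $q_i$ is an odd prime (as $q_i \geq 5$), so $q_{i+1} = q_i + 1$ is even and at least $6$, hence composite — contradicting that $q_{i+1}$ is prime. Thus no $E_i$ is supersingular, i.e. every curve in the cycle is ordinary.
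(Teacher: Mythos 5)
Your proof is correct and takes essentially the same route as the paper: the paper cites the fact that for prime $q\geq 5$ supersingular is equivalent to $\#E(\mathbb{F}_q)=q+1$ (as an exercise in Silverman), and then concludes exactly as you do that $q_{i+1}=q_i+1$ would be even, contradicting primality. The only difference is that you also supply the short Hasse-bound derivation of that equivalence, which the paper leaves as a citation.
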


\begin{proof}
It is known that for any elliptic curve $E/\mathbb{F}_q$ with $q\geq 5$ prime, $E$ is supersingular if and only if
$\# E(\mathbb{F}_q) = q+1$, see for example \cite[Exercise 5.10]{silverman}. Suppose $E_i/\mathbb{F}_{q_i}$ is supersingular for some $i$, then $\#E(\mathbb{F}_{q_i}) = q_i + 1 = q_{i+1}$. But since $q_i$ is prime, $q_i+1$ is even, hence this cannot hold.
\end{proof}

Next, we present a necessary condition for $m$ elliptic curves to form an $m$-cycle. This condition is \emph{not} sufficient as every trace in the Hasse interval can be realized by an elliptic curve \cite{deuring}, hence this condition is not a strong restriction on the curves in the cycle.

\begin{lemma}
\label{lem:sum-of-traces}
Let $E_1/\mathbb{F}_{q_1}$,$\ldots$, $E_m/\mathbb{F}_{q_m}$ be an $m$-cycle of elliptic curves, with traces $t_1,\ldots,t_m$ respectively. Then the sum of their traces satisfies
\begin{align}
t_1 + \cdots + t_m  = m\,.\label{eq:m-cycle-t}
\end{align}
\end{lemma}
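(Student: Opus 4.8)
The plan is to unwind the cycle condition using the definition of the trace. For each curve $E_i/\mathbb{F}_{q_i}$, the trace is $t_i = q_i + 1 - \#E_i(\mathbb{F}_{q_i})$, and the cycle condition says $\#E_i(\mathbb{F}_{q_i}) = q_{i+1}$ (indices taken cyclically, so $q_{m+1} = q_1$). Substituting gives $t_i = q_i + 1 - q_{i+1}$ for each $i = 1, \ldots, m$.

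Next I would sum these $m$ equations. The right-hand side is $\sum_{i=1}^m (q_i + 1 - q_{i+1}) = \sum_{i=1}^m q_i + m - \sum_{i=1}^m q_{i+1}$. Because the indices are cyclic, $\sum_{i=1}^m q_{i+1} = \sum_{i=1}^m q_i$, so the two sums of the $q_i$'s cancel, leaving exactly $m$. Hence $t_1 + \cdots + t_m = m$, as claimed.

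This proof is essentially a one-line telescoping argument, so there is no real obstacle; the only thing to be slightly careful about is the cyclic indexing convention (that $\#E_m(\mathbb{F}_{q_m}) = q_1$, i.e. $q_{m+1} := q_1$), which is exactly what makes the sum of the $q_i$ terms cancel rather than leaving a boundary term $q_1 - q_{m+1}$. I would state the indexing convention explicitly before summing to make the cancellation transparent.
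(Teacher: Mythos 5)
Your proof is correct and is essentially the same argument as the paper's: sum the $m$ cycle equations $\#E_i(\mathbb{F}_{q_i}) = q_{i+1}$ and use $\#E_i(\mathbb{F}_{q_i}) = q_i + 1 - t_i$ so that the $q_i$'s cancel cyclically. Your remark about making the cyclic indexing convention explicit is a reasonable presentational point but does not change the substance.
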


\begin{proof}
Let $n_i = \#E_i(\mathbb{F}_{q_i})$, for each $i=1,\ldots,m$. Since the curves form a cycle, we have the constraints $n_1 = q_2$, $\ldots$, $n_i = q_{i+1}$, $\ldots$, $n_m = q_1$. If we sum up these $m$ equations, we get $n_1 + \cdots + n_m = q_1 + \cdots + q_m$. Using the fact that $n_i = q_i + 1 - t_i$, we get $t_1 + \cdots + t_m = m$.
\end{proof}

\section{MNT cycles}
\label{section:MNT}

We consider pairing-friendly cycles consisting of MNT curves (see \cref{table:mnt}), which are the ordinary prime-order elliptic curves of embedding degrees $3,4,6$. For brevity, we use the term \emph{MNT cycles} for cycles where every curve is an MNT curve.

In \cite{kt, snarks}, MNT curves were used to give the first construction of pairing-friendly 2-cycles. In this section, we construct MNT 4-cycles, and characterize the possible MNT cycles.

\begin{proposition}
\label{prop:mnt-4-6-cycles}
All MNT cycles have lengths 2 or 4, and they are either $(6,4)$-cycles or $(6,4,6,4)$-cycles.
\end{proposition}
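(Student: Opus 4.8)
The plan is to combine the trace sum identity from \cref{lem:sum-of-traces} with the algebraic constraints coming from the MNT parametrizations in \cref{table:mnt}. First I would set up notation: suppose we have an $m$-cycle $E_1/\mathbb{F}_{q_1},\dots,E_m/\mathbb{F}_{q_m}$ of MNT curves. Each $E_i$ is of type $k=3$, $4$, or $6$, and for each one the field size $q_i$, order $n_i$, and trace $t_i$ are given by the appropriate polynomial evaluated at some integer $x_i$. The cycle condition says $n_i = q_{i+1}$, so I would write out what $q_k(x) = n_{k'}(x')$ means for each of the possible pairs of consecutive types $(k,k')$. Because the $q_k$ and $n_k$ are explicit low-degree polynomials, this is a finite list of Diophantine-type conditions. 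The first goal is to rule out $k=3$ entirely, i.e. to show no curve in a cycle can be a $k=3$ MNT curve, and then to show that the only consistent way to chain $k=4$ and $k=6$ curves is to strictly alternate $6,4,6,4,\dots$, forcing $m$ even and the cycle type $(6,4)^{m/2}$.

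The main tool for ruling out $k=3$ and pinning down the alternation is a sign/congruence analysis of the traces together with \cref{eq:m-cycle-t}. For a $k=3$ curve, $q_3(x) = 12x^2-1$, which is $\equiv -1 \equiv 2 \pmod 3$; for $k=4$, $q_4(x)=x^2+x+1 \equiv 1 \pmod 3$ always (since $x^2+x$ is even... rather, working mod 3: $x^2+x+1$ takes values $1,0,1$ for $x\equiv 0,1,2$, so it is $0$ or $1$); and for $k=6$, $q_6(x)=4x^2+1 \equiv x^2+1 \pmod 3 \in \{1,2\}$. Since each $q_{i+1}$ must simultaneously be a value of $n_{k_i}$ and of $q_{k_{i+1}}$, I would chase through which types can follow which, using both mod-$2$ and mod-$3$ information (and perhaps mod $4$) on $n_k(x)$ versus $q_{k'}(x')$. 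For instance $n_3(x) = 12x^2-6x+1$ is odd and $\equiv 1 \pmod 6$; $n_4(x)$ is $x^2+2x+2$ or $x^2+1$; $n_6(x)=4x^2+2x+1$. Matching these against the allowed forms of the next $q$ should eliminate most transitions. The cleanest part of the argument will then be: once only $4\to 6$ and $6\to 4$ transitions survive, the cycle structure is forced to alternate, so $m$ is even and every curve has embedding degree $4$ or $6$ in alternation.

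Having reduced to alternating $(6,4,6,4,\dots)$ cycles, the remaining task is to bound the length: show $m=2$ or $m=4$ and no longer. Here I would use \cref{eq:m-cycle-t} quantitatively. In an alternating cycle of length $m=2\ell$, there are $\ell$ curves of type $6$ with traces $t = -2u_j+1$ and $\ell$ curves of type $4$ with traces $t = -v_j$ or $v_j+1$ (depending on which of the two $k=4$ sub-families occurs), and the cycle-matching equations $q_6(u_j) = n_4(\cdot)$ and $q_4(v_j)=n_6(\cdot)$ relate consecutive parameters. Substituting, I expect the matching conditions to force each parameter to essentially determine the next (e.g. something like $v_{j} = \pm 2u_j$ up to small shifts, and $u_{j+1} = \pm 2 v_j$ up to small shifts), so going around the cycle gives a relation like $u_1 = (\pm 2)^{2\ell} u_1 + (\text{small correction})$, i.e. $u_1$ up to sign is multiplied by a power of $4$; combined with the Hasse-type size constraints ($q_i$ all comparable in size, since each is the order of the previous curve and hence within $O(\sqrt{q})$ of it) this is only possible for very small $\ell$. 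Pinning down exactly which small cases ($m=2$ and $m=4$) actually yield solutions, versus which are excluded, is where the real casework lives, and I expect \emph{that} — carefully enumerating the sign choices and the two $k=4$ sub-families around the cycle and checking which closure conditions are solvable in integers — to be the main obstacle. The length-2 case is already known to be realizable \cite{kt, snarks}; for length 4 the proposition asserts a construction exists, so the casework must be shown to be consistent (and an explicit example exhibited), while for $m=6$ and beyond the closure relations must be shown inconsistent, presumably again by a size/sign argument forcing the parameters to $0$ or to values outside the valid range.
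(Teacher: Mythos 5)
Your high-level skeleton --- rule out embedding degree $3$, force strict alternation between $4$ and $6$, then bound the length --- matches the paper's, and your congruence analysis does dispose of the $k=3$ case (the paper reaches the same conclusion via the divisibility $n_{k_i}(x_i) \mid \Phi_{k_i}(q_{k_i}(x_i))$, which forces $q_3(x_j)=12x_j^2-1\equiv 1$ modulo the preceding embedding degree, impossible since $12x^2-1\equiv -1 \pmod{3,4,6}$). However, two of your steps rest on mechanisms that do not work. Congruences and the trace-sum identity cannot rule out two consecutive curves of the \emph{same} embedding degree: the transition $4\to4$ reduces to $x_i^2 = x_{i+1}(x_{i+1}+1)$ or $(x_i+1)^2=x_{i+1}(x_{i+1}+1)$, and $6\to6$ to $2x_i(2x_i+1)=(2x_{i+1})^2$, and these equations are solvable modulo every integer (take all variables $\equiv 0$), while \cref{lem:sum-of-traces} is a single global linear condition that says nothing about one transition. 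The ingredient you are missing is the elementary Diophantine fact that a product of two consecutive nonzero integers is never a perfect square.

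The more serious gap is in the length bound, where your proposed mechanism is based on a miscomputation of how the parameters propagate. The $6\to4$ transition indeed gives $x_{i+1}=2x_i$, but the $4\to6$ transition gives $2|x_{i+1}|=|x_i|$ or $|x_i+1|$, i.e.\ it \emph{halves} the parameter rather than doubling it; and since the degree-$4$ parameter $x_i=2x_{i-1}$ is even, parity forces the branch $2|x_{i+1}|=|x_i|$. One full $6\to4\to6$ period therefore multiplies the degree-$6$ parameter by $\pm1$, not by $4$, so the closure relation $u_1=(\pm2)^{2\ell}u_1+(\text{small})$ you anticipate never appears, and the Hasse-type size argument yields no bound: the parameters keep constant magnitude for cycles of any length. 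The correct finish is a pigeonhole on distinctness. All degree-$6$ parameters have the same absolute value, so every degree-$6$ curve in the cycle is defined over the single field $\mathbb{F}_{q_6(x_1)}$, and there are only two admissible orders $n_6(x_1)$ and $n_6(-x_1)$ over it; hence the cycle contains at most two degree-$6$ curves and has length at most $4$, with the two sign choices $x_3=\pm x_1$ producing exactly the $(6,4)$- and $(6,4,6,4)$-parametrizations.
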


The proof of this result proceeds in a few steps. First in \cref{lem:mnt-k-3} we show that no curve in an MNT cycle can have embedding degree 3. Then in \cref{lem:mnt-consecutive-k-4,lem:mnt-consecutive-k-6} we show that no two consecutive curves in an MNT cycle can both have embedding degree 4 or 6. Finally we consider MNT cycles with alternating embedding degrees 4 and 6, and we show that these can only have lengths 2 or 4.

\begin{lemma}
\label{lem:mnt-k-3}
Let $E_1/\mathbb{F}_{q_{k_1}(x_1)},\ldots,E_m/\mathbb{F}_{q_{k_m}(x_m)}$ be an MNT cycle, with $x_1,\ldots, x_m\in\mathbb{Z}$ and embedding degrees $k_1,\ldots,k_m\in\{3,4,6\}$. Then none of the embedding degrees can be 3.
\end{lemma}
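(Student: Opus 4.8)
The plan is to suppose for contradiction that some curve in the cycle, say $E_i/\mathbb{F}_{q_{k_i}(x_i)}$, has embedding degree $k_i = 3$, and then trace through the cyclic constraints to derive a parity or divisibility contradiction. The key relation is that for two consecutive curves, $\#E_i(\mathbb{F}_{q_{k_i}(x_i)}) = q_{k_{i+1}}(x_{i+1})$, i.e.\ $n_{k_i}(x_i) = q_{k_{i+1}}(x_{i+1})$. Using \cref{table:mnt}, if $k_i = 3$ then $n_3(x_i) = 12x_i^2 - 6x_i + 1$, which must equal one of $q_3(x_{i+1}) = 12x_{i+1}^2 - 1$, $q_4(x_{i+1}) = x_{i+1}^2 + x_{i+1} + 1$, or $q_6(x_{i+1}) = 4x_{i+1}^2 + 1$. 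Similarly, the curve \emph{before} $E_i$ in the cycle must have number of points equal to $q_3(x_i) = 12x_i^2 - 1$, so $n_{k_{i-1}}(x_{i-1}) = 12x_i^2 - 1$ for some MNT polynomial $n_{k_{i-1}}$.

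First I would reduce these polynomial identities modulo a small integer to extract a contradiction. Working modulo $3$ looks promising: $q_3(x) = 12x^2 - 1 \equiv -1 \equiv 2 \pmod 3$ always, while $q_4(x) = x^2 + x + 1 \equiv 0$ or $2 \pmod 3$ and $q_6(x) = 4x^2 + 1 \equiv x^2 + 1 \equiv 1$ or $2 \pmod 3$. Meanwhile $n_3(x) = 12x^2 - 6x + 1 \equiv 1 \pmod 3$, $n_4(x) = x^2 + 2x + 2 \equiv (x+1)^2 + 1$ so $\equiv 1$ or $2 \pmod 3$ (never $0$), the other $n_4(x) = x^2 + 1 \equiv 1$ or $2 \pmod 3$, and $n_6(x) = 4x^2 + 2x + 1 \equiv x^2 + 2x + 1 = (x+1)^2 \equiv 0$ or $1 \pmod 3$. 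So the constraint that some $n_{k_{i-1}}(x_{i-1}) = q_3(x_i) \equiv 2 \pmod 3$ forces $k_{i-1} \in \{4, 6\}$ with the appropriate residue — not immediately a contradiction, so modulo $3$ alone may not suffice. I would then also try working modulo $2$: a prime $q_{k_i}(x_i)$ must be odd (it is $\geq 5$), and $n_3(x) = 12x^2 - 6x + 1$ is always odd, while the "forward" equation $n_3(x_i) = q_{k_{i+1}}(x_{i+1})$ and the "backward" equation combine with the trace-sum \cref{lem:sum-of-traces}. Most likely the cleanest route is a careful case analysis combining the mod $3$ residues above with the observation that $q_3(x_i) = 12x_i^2 - 1$ and chasing which family can produce it: e.g.\ $12x_i^2 - 1 = 4x_{i+1}^2 + 1$ gives $12 x_i^2 - 4 x_{i+1}^2 = 2$, impossible mod $4$; $12x_i^2 - 1 = x_{i+1}^2 + x_{i+1} + 1$ gives $12 x_i^2 - 2 = x_{i+1}(x_{i+1}+1)$, and the right side is even while $12x_i^2 - 2 \equiv -2 \equiv 1 \pmod 3$ forces $x_{i+1} \equiv 1 \pmod 3$, still needing more work; $12 x_i^2 - 1 = 12 x_{i+1}^2 - 1$ forces $x_i = \pm x_{i+1}$, and distinctness plus the structure of the cycle should kill this. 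So I would exhaustively match $q_3(x_i)$ against all three $q$-polynomials and against the requirement that $n_3(x_i)$ equals one of the three $q$-polynomials, pinning down the few surviving cases.

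The main obstacle I anticipate is that no single modulus kills all cases at once; some cases (particularly those pairing $k=3$ with $k=4$) will require combining two or three congruence conditions, or using the discriminant/CM structure, or invoking the global constraint \cref{lem:sum-of-traces} that the traces sum to $m$. In particular the equation $12x_i^2 - 2 = x_{i+1}^2 + x_{i+1}$ and its mirror $12x_i^2 - 6x_i + 1 = x_{i+1}^2 + x_{i+1} + 1$ (i.e.\ $12x_i^2 - 6x_i = x_{i+1}^2 + x_{i+1}$) are genuine quadratic Diophantine conditions that may have small solutions, so I would need to check whether those solutions actually extend to curves (the CM equation $4q - t^2 = Dy^2$ must have a solution with small $D$, and for MNT curves this is automatic by construction, so that escape hatch is closed) and whether they are compatible with closing up into a cycle. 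If a purely elementary congruence argument does not finish it, the fallback is to note that an MNT $k=3$ curve has $q_3(x) = 12x^2 - 1$ and chase the parity of $x_i$: since $q_3(x_i)$ is the order of the preceding curve, which for MNT $k=4,6$ families is $n_4$ or $n_6$, and these have fixed parities/residues as computed above, a short finite case check should close every branch. I expect the final argument to be a page or less of such congruence bookkeeping, with the $k=3 \to k=4$ transition being the stubborn case.
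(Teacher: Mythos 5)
Your overall strategy---congruence analysis of the polynomial matching conditions---is viable, and the decisive observation is already present in your residue computations, but you do not close the argument and you direct most of your effort at the wrong constraint. The paper's proof uses only the \emph{backward} condition at the $k=3$ curve: $q_3(x_j)=12x_j^2-1$ equals $n_{k_{j-1}}(x_{j-1})$, the prime order of a curve of embedding degree $k_{j-1}$, and by \cref{lem:k-phi} and \cref{lem:q-mod-n} any such prime is $\equiv 1\pmod{k_{j-1}}$; since $12x_j^2-1\equiv -1\pmod{3,4,6}$, this is immediately impossible. Your mod-$3$ table is exactly this fact unpacked by hand: it rules out $k_{j-1}\in\{3,6\}$ (your own computation shows $n_6\equiv 0$ or $1\pmod 3$, so your conclusion ``forces $k_{i-1}\in\{4,6\}$'' should read ``forces $k_{i-1}=4$''), and the one surviving case $k_{j-1}=4$ dies at once mod $4$: both $x^2+1=12y^2-1$ and $(x+1)^2+1=12y^2-1$ require a square congruent to $12y^2-2\equiv 2\pmod 4$. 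Had you carried out that single further step, you would be done; instead the plan trails off into fallbacks.

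Two concrete defects in the rest of the plan. First, the displayed equations in your middle paragraph ($12x_i^2-1=4x_{i+1}^2+1$, $12x_i^2-1=x_{i+1}^2+x_{i+1}+1$, $12x_i^2-1=12x_{i+1}^2-1$) are not cycle conditions: the cycle forces $q_3(x_i)$ to equal an $n$-polynomial evaluated at $x_{i-1}$, not a $q$-polynomial evaluated at $x_{i+1}$, so ruling these out proves nothing. Second, the ``stubborn'' forward case $n_3(x_i)=q_4(x_{i+1})$ that you identify as the main obstacle is a genuine dead end: it has honest solutions (e.g.\ $n_3(1)=q_4(2)=7$), so no congruence or Diophantine work on the forward condition alone can exclude embedding degree $3$. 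The contradiction must come from the backward condition, as above, and neither \cref{lem:sum-of-traces} nor the CM equation is needed.
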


To show \cref{lem:mnt-k-3}, we make use of the following result.

\begin{lemma}[{\cite[Proposition 2.10]{washington}}]
\label{lem:q-mod-n}
Let $q$ be a prime such that $q\nmid k$. Then $q$ divides $\Phi_k(a)$ for some $a\in\mathbb{Z}$ if and only if $q \equiv 1 \pmod{k}$.
\end{lemma}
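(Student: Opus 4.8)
The plan is to translate the divisibility condition $q \mid \Phi_k(a)$ into a statement about the multiplicative order of $a$ in the cyclic group $\mathbb{F}_q^* = (\mathbb{Z}/q\mathbb{Z})^*$, which has order $q-1$. The central tool is the factorization \eqref{eq:cyclotomic}, namely $x^k - 1 = \prod_{d\mid k}\Phi_d(x)$, reduced modulo $q$. Note first that $\Phi_k(x) \mid x^k - 1$, so $q \mid \Phi_k(a)$ forces $a^k \equiv 1 \pmod q$; in particular $a \not\equiv 0$, so $a$ represents a unit in $\mathbb{F}_q^*$.

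First I would record the key separability fact: since $q \nmid k$, the polynomial $x^k - 1$ has no repeated roots over $\mathbb{F}_q$, because its derivative $k x^{k-1}$ shares no root with it (the derivative vanishes only at $0$, which is not a root of $x^k-1$, and $k \not\equiv 0 \pmod q$). Consequently the reductions modulo $q$ of the factors $\Phi_d(x)$ for the various $d \mid k$ are pairwise coprime, so each root of $x^k - 1$ in $\mathbb{F}_q$ is a root of exactly one $\Phi_d$. Using this I would prove the bridging claim: if $a \in \mathbb{F}_q^*$ has multiplicative order exactly $d$ (necessarily $d \mid k$ whenever $a^k \equiv 1$), then $q \mid \Phi_d(a)$ while $q \nmid \Phi_e(a)$ for every other $e \mid k$. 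Indeed $a^d \equiv 1$ gives $q \mid a^d - 1 = \prod_{e \mid d}\Phi_e(a)$, so $q$ divides some $\Phi_e(a)$ with $e \mid d$; if $e < d$ then $a^e \equiv 1$, contradicting the order, so $q \mid \Phi_d(a)$, and by pairwise coprimality $q$ divides no other factor.

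For the ``if'' direction, assume $q \equiv 1 \pmod k$, i.e.\ $k \mid q-1$. Since $\mathbb{F}_q^*$ is cyclic of order $q-1$, it contains an element $a$ of order exactly $k$, and the bridging claim gives $q \mid \Phi_k(a)$, as desired. For the ``only if'' direction, assume $q \mid \Phi_k(a)$ for some integer $a$. Then $a^k \equiv 1 \pmod q$, so $a$ has some order $r$ in $\mathbb{F}_q^*$ with $r \mid k$. If $r < k$, the bridging claim applied with $d = r$ would force $q \mid \Phi_r(a)$ and $q \nmid \Phi_k(a)$, contradicting our assumption; hence $r = k$. Since the order of any element divides the group order $q-1$, we conclude $k \mid q-1$, i.e.\ $q \equiv 1 \pmod k$.

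The main obstacle is establishing the bridging claim rigorously, and in particular the separability of $x^k - 1$ modulo $q$: this is precisely where the hypothesis $q \nmid k$ enters, and it is what makes the reduced factors $\Phi_d \bmod q$ pairwise coprime, so that ``$a$ has order exactly $k$'' becomes genuinely equivalent to ``$q \mid \Phi_k(a)$.'' Once that separability is in hand, both directions follow routinely from the cyclic structure of $\mathbb{F}_q^*$ and the factorization \eqref{eq:cyclotomic}.
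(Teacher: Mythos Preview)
Your argument is correct and is the standard proof of this classical fact. Note, however, that the paper does not actually give its own proof of this lemma: it is stated with a citation to \cite[Proposition 2.10]{washington} and used as a black box, so there is nothing to compare against beyond observing that your write-up is essentially the textbook argument one would find in the cited reference.
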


\begin{proof}[Proof of \cref{lem:mnt-k-3}]
By \cref{lem:k-phi}, the condition that $E_i/\mathbb{F}_{q_{k_i}(x_i)}$ has embedding degree $k_i$ implies that $n_{k_i}(x_i) \,|\,\Phi_{k_i}(q_{k_i}(x_i))$. Since $n_{k_i}(x_i) = q_{k_{i+1}}(x_{i+1})$, \cref{lem:q-mod-n} implies
\begin{equation}
q_{k_{i+1}}(x_{i+1}) \equiv 1 \pmod{k_i}\,.
\end{equation} 
Suppose that $k_{j} = 3$ for some $j$. From \cref{table:mnt},
\begin{equation}
q_3(x_j) = 12 x_j^2 - 1 \equiv 1 \pmod{k_{j-1}}\,.
\end{equation}
However, this is not possible since $12x_j^2 - 1 \equiv -1 \pmod{3,4,6}$.
\end{proof}

We show that for any MNT cycle, no two consecutive curves can both have embedding degree $4$ or $6$. 

\begin{lemma}
\label{lem:mnt-consecutive-k-4}
Let $E_1/\mathbb{F}_{q_{k_1}(x_1)},\ldots,E_m/\mathbb{F}_{q_{k_m}(x_m)}$ be an MNT cycle, with $x_1,\ldots,x_m\in\mathbb{Z}$. Then no two consecutive curves can both have embedding degree $4$.
\end{lemma}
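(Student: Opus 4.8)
The plan is to use the cycle condition together with the explicit polynomials of \cref{table:mnt}, reducing the claim to showing that two elementary Diophantine equations have no solutions compatible with the primality requirement in \cref{def:cycle1}. Suppose for contradiction that two consecutive curves, say $E_i/\mathbb{F}_{q_4(x_i)}$ and $E_{i+1}/\mathbb{F}_{q_4(x_{i+1})}$ (with indices taken cyclically, so that the case $i=m$, $i+1=1$ is included), both have embedding degree $4$. The cycle condition $\#E_i(\mathbb{F}_{q_4(x_i)}) = q_4(x_{i+1})$ reads $n_4(x_i) = q_4(x_{i+1}) = x_{i+1}^2 + x_{i+1} + 1$, and by \cref{table:mnt} we have $n_4(x_i) \in \{\,x_i^2 + 2x_i + 2,\ x_i^2 + 1\,\}$. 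So it suffices to rule out, over the integers and under the assumption that $q_4(x_i)$ and $q_4(x_{i+1})$ are prime, both of the equations
\[
x_i^2 + 1 = x_{i+1}^2 + x_{i+1} + 1
\qquad\text{and}\qquad
x_i^2 + 2x_i + 2 = x_{i+1}^2 + x_{i+1} + 1 .
\]

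The key step is the same for both equations: multiply through by $4$ and complete the square. The first equation becomes $(2x_i)^2 = (2x_{i+1}+1)^2 - 1$ and the second becomes $(2x_i+2)^2 = (2x_{i+1}+1)^2 - 1$; in either case we obtain two perfect squares differing by $1$, and the only such pair is $\{0,1\}$. Hence the left-hand square must be $0$, which forces $x_i = 0$ in the first case and $x_i = -1$ in the second. But $q_4(0) = q_4(-1) = 1$ is not prime, so $\mathbb{F}_{q_4(x_i)}$ cannot be the field of definition of a curve in the cycle, contradicting \cref{def:cycle1}. This contradiction proves the lemma.

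The computations are elementary, so I do not anticipate a serious obstacle; the one point that deserves care is recognizing that the integer solutions surviving the difference-of-squares argument are exactly those for which a field size in the cycle degenerates to $1$, so they are excluded by the primality hypothesis rather than by the algebra itself. I expect the companion statement for embedding degree $6$ to follow the same template: there the cycle condition $n_6(x_i) = q_6(x_{i+1})$ unwinds to $x_i(2x_i+1) = 2x_{i+1}^2$, which I would handle by a coprimality and parity argument on the factors $x_i$ and $2x_i+1$, again arriving at a degenerate value that is ruled out by primality.
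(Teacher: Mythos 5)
Your proof is correct and follows essentially the same route as the paper: reduce the cycle condition to the two Diophantine equations via \cref{table:mnt}, observe that the only integer solutions are degenerate (your ``two squares differing by $1$'' step is just the completed-square form of the paper's observation that a product of two consecutive nonzero integers is never a square), and kill the degenerate solutions $x_i \in \{0,-1\}$ by noting that a field size or group order in the cycle would equal $1$ and hence fail to be prime.
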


\begin{proof}
Suppose to the contrary that $k_i = k_{i+1} = 4$ for some $i$. Then $n_4(x_i) = q_4(x_{i+1})$. From \cref{table:mnt}, $q_4(x_{i+1}) = x_{i+1}^2 + x_{i+1} + 1$, and there are two possibilities for $n_4(x_i)$.

Suppose $n_4(x_i) = x_i^2 + 2x_i + 2$. Then $x_i^2 + 2x_i + 2 = x_{i+1}^2 + x_{i+1} + 1$, which implies
\begin{equation}
(x_i + 1)^2 = x_{i+1}(x_{i+1} + 1)\,.
\end{equation}
This is a contradiction if $x_i \neq -1$, since the product of two consecutive nonzero integers is not a square.\footnote{Suppose that for some nonzero $x,y\in\mathbb{Z}$, $x(x+1)=y^2$. If $x>0$, then
$x^2 <y^2<(x+1)^2$, which has no integer solutions for $x,y$. If $x<0$, then $x^2>y^2>(x+1)^2$, which also has no integer solutions for $x,y$.} But if $x_{i}=-1$, then $n_4(x_i)=1$ would not be prime.

Suppose $n_4(x_i) = x_i^2 + 1$. Then $x_i^2 + 1 = x_{i+1}^2 + x_{i+1} + 1$, which implies
\begin{equation}
x_i^2 = x_{i+1}(x_{i+1} + 1)\,.
\end{equation}
This is a contradiction by the same argument as above.
\end{proof}

\begin{lemma}
\label{lem:mnt-consecutive-k-6}
Let $E_1/\mathbb{F}_{q_{k_1}(x_1)},\ldots,E_m/\mathbb{F}_{q_{k_m}(x_m)}$ be an MNT cycle with $x_1,\ldots,
x_m\in\mathbb{Z}$. Then no two consecutive curves can both have embedding degree $6$.
\end{lemma}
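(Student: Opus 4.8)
The plan is to follow the same strategy as in the proof of \cref{lem:mnt-consecutive-k-4}. Suppose toward a contradiction that $k_i = k_{i+1} = 6$ for some $i$. The cycle condition gives $n_6(x_i) = q_6(x_{i+1})$, and from \cref{table:mnt} we have $q_6(x) = 4x^2 + 1$ and $n_6(x) = 4x^2 + 2x + 1$. Note that, unlike the $k=4$ case, there is only one choice of $n_6$, so there will be no subcases. Substituting and cancelling the constant term yields $4x_i^2 + 2x_i = 4x_{i+1}^2$, i.e., $x_i(2x_i + 1) = 2x_{i+1}^2$.

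The next step is to turn this into an elementary Diophantine impossibility. One route is to observe that $x_i$ and $2x_i+1$ are coprime and deduce which factors must be squares, but this requires a bit of sign bookkeeping; instead I would multiply through by $8$ and complete the square, rewriting $16x_i^2 + 8x_i = 16x_{i+1}^2$ as
\[
(4x_i+1)^2 - (4x_{i+1})^2 = 1\,.
\]
A difference of two integer squares equal to $1$ factors as $(A-B)(A+B) = 1$ with $A = 4x_i + 1$ and $B = 4x_{i+1}$, forcing $A - B = A + B \in \{1, -1\}$, hence $B = 4x_{i+1} = 0$ and $A = 4x_i + 1 = \pm 1$. Since $x_i \in \mathbb{Z}$, only $A = 1$ is possible, so $x_i = x_{i+1} = 0$. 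But then $q_6(x_{i+1}) = q_6(0) = 1$ is not prime, contradicting \cref{def:cycle1}.

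I do not anticipate a real obstacle: the $k=6$ family is parametrized by a single pair $(q_6, n_6)$, so there is no case analysis, and the whole argument collapses to the identity $(4x_i+1)^2 - (4x_{i+1})^2 = 1$. The only point requiring a little care — exactly as in the proof of \cref{lem:mnt-consecutive-k-4} — is the degenerate solution $x = 0$, which is ruled out because it would force one of the field sizes $q_j$ to equal $1$ rather than a prime.
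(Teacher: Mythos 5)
Your proof is correct and follows essentially the same route as the paper: both reduce the cycle condition $n_6(x_i) = q_6(x_{i+1})$ to the Diophantine equation $2x_i(2x_i+1) = (2x_{i+1})^2$ and then rule out the degenerate solution because it forces $q_6(x_{i+1}) = 1$, which is not prime. The only (harmless) difference is in the elementary finish: the paper invokes the fact that a product of two consecutive nonzero integers is never a square, while you complete the square to get $(4x_i+1)^2 - (4x_{i+1})^2 = 1$ and factor, which handles the degenerate case in the same stroke.
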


\begin{proof}
Suppose to the contrary that $k_i = k_{i+1} = 6$ for some $i$. Then $n_6(x_i) = q_6(x_{i+1})$. From \cref{table:mnt}, $q_6(x_{i+1}) =4x_{i+1}^2 + 1$, and $n_6(x_i) = 4x_i^2 + 2x_i + 1$. Thus $4x_i^2 + 2x_i + 1 = 4x_{i+1}^2 + 1$, which implies
\begin{equation}
2x_i(2x_i + 1) = (2x_{i+1})^2\,.
\end{equation}
This is a contradiction if $x_{i+1}\neq 0$, since the product of two consecutive nonzero integers is not a square. But if $x_{i+1}=0$, then $q_6(x_{i+1}) = 1$ would not be prime.
\end{proof}

We now consider MNT cycles consisting of elliptic curves with alternating
embedding degrees $4$ and $6$.

\begin{lemma}
\label{lem:mnt-4-6}
Let $E_i/\mathbb{F}_{q_4}(x_i)$, $E_{i+1}/\mathbb{F}_{q_6}(x_{i+1})$
be consecutive curves in an MNT cycle. Then $2|x_{i+1}| = |x_i|$ or $2|x_{i+1}| = |x_i + 1|$.
\end{lemma}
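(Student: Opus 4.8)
The plan is to use the cycle condition directly, together with the parametrizations in \cref{table:mnt}. Since $E_i$ has embedding degree $4$ and $E_{i+1}$ has embedding degree $6$, and since these are consecutive curves in the cycle, the defining equation of the cycle gives $n_4(x_i) = \#E_i(\mathbb{F}_{q_4(x_i)}) = q_6(x_{i+1})$. From \cref{table:mnt} we have $q_6(x_{i+1}) = 4x_{i+1}^2 + 1$, while $n_4(x_i)$ is one of the two polynomials $x_i^2 + 2x_i + 2$ or $x_i^2 + 1$, according to which of the two $k=4$ sub-families $E_i$ belongs to. So the whole argument reduces to solving $n_4(x_i) = 4x_{i+1}^2 + 1$ in each of these two cases.

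First I would treat the case $n_4(x_i) = x_i^2 + 2x_i + 2$. Completing the square gives $x_i^2 + 2x_i + 2 = (x_i+1)^2 + 1$, so the cycle equation becomes $(x_i+1)^2 + 1 = 4x_{i+1}^2 + 1$, i.e. $(x_i+1)^2 = (2x_{i+1})^2$, and hence $|x_i+1| = 2|x_{i+1}|$. Next I would treat the case $n_4(x_i) = x_i^2 + 1$, where the cycle equation becomes $x_i^2 + 1 = 4x_{i+1}^2 + 1$, i.e. $x_i^2 = (2x_{i+1})^2$, and hence $|x_i| = 2|x_{i+1}|$. Taking the union of the two cases yields exactly the claimed dichotomy $2|x_{i+1}| = |x_i|$ or $2|x_{i+1}| = |x_i+1|$.

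There is no real obstacle here: the statement is an immediate consequence of a single substitution followed by completing the square in each of the two cases for $n_4(x_i)$. The only point requiring a little care is that one must genuinely invoke both possible values of $n_4(x_i)$ from \cref{table:mnt} (there is no a priori reason to prefer one sub-family over the other), since the conclusion of the lemma is precisely the disjunction of the two resulting identities; and one should keep in mind that here $x_i, x_{i+1} \in \mathbb{Z}$, so passing from an equality of squares to an equality of absolute values is legitimate.
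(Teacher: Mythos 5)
Your proof is correct and follows essentially the same route as the paper's: substitute $n_4(x_i) = q_6(x_{i+1})$ using \cref{table:mnt}, split into the two sub-families for $n_4$, complete the square, and read off the equality of absolute values. No gaps.
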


\begin{proof}
We have the condition $n_4(x_i) = q_6(x_{i+1})$. By
\cref{table:mnt}, $q_6(x_{i+1}) = 4x_{i+1}^2 + 1$, and there are
two possibilities for $n_4(x_i)$.

If $n_4(x_i) = x_i^2 + 2x_i + 2$, then $x_i^2 + 2x_i+2 = 4x_{i+1}^2 + 1$, which we simplify to $(x_i+1)^2 = (2x_{i+1})^2$. Thus $2|x_{i+1}| = |x_i + 1|$.

If instead $n_4(x_i) = x_i^2 +1$, then $x_i^2 + 1 = 4x_{i+1}^2 + 1$, which we simplify to $x_i^2 = (2x_{i+1})^2$. Thus $2|x_{i+1}| = |x_i|$.
\end{proof}

\begin{lemma}
\label{lem:mnt-6-4}
Let $E_i/\mathbb{F}_{q_6}(x_i)$, $E_{i+1}/\mathbb{F}_{q_4}(x_{i+1})$ be consecutive curves in an MNT cycle. Then $x_{i+1} = 2x_i$.
\end{lemma}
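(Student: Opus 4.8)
The plan is to use the single cycle constraint linking the two curves, namely $\#E_i(\mathbb{F}_{q_6(x_i)}) = q_4(x_{i+1})$, which by \cref{table:mnt} reads $n_6(x_i) = q_4(x_{i+1})$, i.e.
\begin{equation*}
4x_i^2 + 2x_i + 1 = x_{i+1}^2 + x_{i+1} + 1 .
\end{equation*}
Rearranging gives $x_{i+1}^2 + x_{i+1} - 2x_i(2x_i+1) = 0$, and the key observation is that the left-hand side factors over $\mathbb{Z}[x_i,x_{i+1}]$ as $(x_{i+1} - 2x_i)(x_{i+1} + 2x_i + 1)$. Hence the only integer possibilities are $x_{i+1} = 2x_i$ or $x_{i+1} = -2x_i - 1$, and the whole argument reduces to disposing of the second case.

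To handle $x_{i+1} = -2x_i - 1$, I would invoke the built-in symmetry of the $k=4$ row of \cref{table:mnt}: under the substitution $x \mapsto -x-1$ one has $q_4(-x-1) = q_4(x)$, while the two listed choices $(n_4,t_4) = (x^2+2x+2,\,-x)$ and $(x^2+1,\,x+1)$ are interchanged. Consequently the data $(x_{i+1},\ \text{choice of branch})$ and $(-x_{i+1}-1,\ \text{other branch})$ describe the \emph{same} curve over the \emph{same} field $\mathbb{F}_{q_4(x_{i+1})}$, and in particular produce the same value of $\#E_{i+1} = q_{i+2}$. Thus in the case $x_{i+1} = -2x_i-1$ one may simply relabel, replacing $x_{i+1}$ by $-x_{i+1}-1 = 2x_i$, after which $x_{i+1} = 2x_i$ as claimed. (This parallels the situation in \cref{lem:mnt-4-6}: going from $k=4$ to $k=6$, the analogous symmetry $x \mapsto -x$ of the $k=6$ row only lets one pin down $|x_{i+1}|$, which is why that lemma is stated with absolute values and this one is not.)

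I expect the only genuine subtlety to be this last bookkeeping step: one must check that relabelling $x_{i+1}$ via $x \mapsto -x-1$ is harmless not merely for $q_4$ but also for the \emph{outgoing} edge of the cycle, i.e. that $n_4$ evaluated on the two descriptions agrees — which it does precisely because the two branches swap under the substitution. Everything else (reading off the polynomials from the table, the factorisation, and noting degenerate cases such as $x_i = 0$ already being excluded by primality of $q_6(x_i)$) is routine algebra.
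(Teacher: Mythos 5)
Your proposal is correct and follows the same route as the paper's proof: equate $n_6(x_i) = q_4(x_{i+1})$ via \cref{table:mnt} and reduce to $2x_i(2x_i+1) = x_{i+1}(x_{i+1}+1)$. The difference is that the paper then simply asserts that this implies $x_{i+1} = 2x_i$, silently discarding the second integer root $x_{i+1} = -2x_i-1$ coming from the factorization $(x_{i+1}-2x_i)(x_{i+1}+2x_i+1)=0$. Your treatment of that root --- observing that $x \mapsto -x-1$ fixes $q_4$ and interchanges the two branches $(n_4,t_4)=(x^2+2x+2,\,-x)$ and $(x^2+1,\,x+1)$ of \cref{table:mnt}, so that $(x_{i+1},\text{branch})$ and $(-x_{i+1}-1,\text{other branch})$ describe the same curve over the same field with the same outgoing order, whence one may always normalize to $x_{i+1}=2x_i$ --- is precisely the justification the paper's one-line conclusion is missing, and your remark that this parallels the $x\mapsto -x$ ambiguity absorbed into absolute values in \cref{lem:mnt-4-6} is apt. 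The only caveat is that, read literally, the conclusion $x_{i+1}=2x_i$ then holds for a normalized choice of parameter rather than for every admissible representation of $E_{i+1}$; this is harmless for the downstream use in \cref{prop:mnt-4-6-cycles} (which only needs the resulting values of $q$ and $n$, both invariant under the relabelling), but it deserves the explicit sentence you give it.
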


\begin{proof}
We have the condition $n_6(x_i) = q_4(x_{i+1})$. By \cref{table:mnt}, this gives $4x_i^2 + 2 x_i + 1 = x_{i+1}^2 +
x_{i+1} + 1$, or  $2x_i(2x_i + 1) = x_{i+1}(x_{i+1} + 1)$. This implies $x_{i+1} = 2x_i$.
\end{proof}

We now show \cref{prop:mnt-4-6-cycles} that all MNT cycles are $(6,4)$-cycles or $(6,4,6,4)$-cycles.

\begin{proof}[Proof of \cref{prop:mnt-4-6-cycles}]
By \cref{lem:mnt-k-3}, \cref{lem:mnt-consecutive-k-4} and \cref{lem:mnt-consecutive-k-6}, all MNT cycles consist of curves with embedding degrees alternating between $4$ and $6$, and have even lengths.

Let $E_1/\mathbb{F}_{q_6}(x_1),E_2/\mathbb{F}_{q_4}(x_2), \ldots,E_{2m}/\mathbb{F}_{q_4}(x_{2m})$ be an MNT cycle. We first observe that \cref{lem:mnt-4-6} and \cref{lem:mnt-6-4} imply that $|x_1| = |x_3| = \cdots = |x_{2m-1}|$. Thus $q_6(x_1) = q_6(x_3) = \cdots = q_6(x_{2m-1})$. As there are only two possibilities for $n_6(x_1), n_6(x_3),\ldots, n_6(x_{2m-1})$, for the curves to be distinct we must have $m\leq 4$, and if $m=4$ then we must have $x_3 = -x_1$.

Let $x := x_1$. Then \cref{lem:mnt-6-4} implies $x_2 = 2x$. By \cref{lem:mnt-4-6}, either $x_3 = x$, in which case we have a $(6,4)$-cycle, or $x_3=-x$. For the latter case, \cref{lem:mnt-6-4} implies that $x_4 = -2x$, which gives us a $(6,4,6,4)$-cycle.

By substituting the possible parameter values for $x$ into the polynomials in \cref{table:mnt}, we obtain the parametrizations of the possible families of MNT $(6,4)$-cycles in \cref{table:mnt-2-cycles-polys} and $(6,4,6,4)$-cycles in \cref{table:mnt-4-cycles-polys}. These cycles can be constructed by substituting integer values of $x$ and checking if all the $n(x)$'s and $q(x)$'s are prime.
\end{proof}

The MNT $(6,4,6,4)$-cycles in \cref{table:mnt-4-cycles-polys} are unions of two MNT $(6,4)$-cycles. Indeed, the pairs $(E_1,E_2)$ and $(E_3,E_4)$ each form $(6,4)$-cycles. Furthermore, $E_1,E_3$ are defined over the same finite field. Interestingly, these are the only possible MNT 4-cycles, and no longer cycles consisting of distinct elliptic curves can be obtained by taking unions of MNT 2-cycles. 

\begin{table}
\caption{MNT $(6,4)$-cycles.}
\label{table:mnt-2-cycles-polys}
\begin{center} 
\begin{tabular}{|c|c|c|}
\hline
& $E_1$ & $E_2$  \\ \hline
$k$ & $6$ & $4$  \\ \hline
$q(x)$ & $4x^2 +1$ & $4x^2 + 2x+1$ \\
$n(x)$ &  $4x^2 + 2x+1$ & $4x^2+1$  \\ 
$t(x)$ & $- 2x+1$ & $2x+1$ \\ \hline
\end{tabular}
\end{center}
\end{table}

\begin{table}
\caption{MNT $(6,4,6,4)$-cycles.}
\label{table:mnt-4-cycles-polys}
\begin{center} 
\begin{tabular}{|c|c|c|c|c|}
\hline
& $E_1$ & $E_2$ & $E_3$ & $E_4$ \\ \hline
$k$ & $6$ & $4$ & $6$ & $4$ \\ \hline
$q(x)$ & $4x^2 +1$ & $4x^2 +2x+1$ & $4x^2+1$ & $4x^2-2x+1$ \\
$n(x)$ &  $4x^2+2x+1$ & $4x^2+1$ & $4x^2-2x+1$ & $4x^2+1$ \\
$t(x)$ & $-2x+1$ & $2x+1$ & $2x+1$ & $-2x+1$ \\
\hline
\end{tabular}
\end{center}
\end{table}

\begin{example}
\label{ex:mnt-2-cycle}
We give an example of an MNT $(6,4)$-cycle, using the parametrization in \cref{table:mnt-2-cycles-polys}. If $x=1$, we check that $4x^2+1 = 5$ and $4x^2 -2x+1 = 3$ are prime. We compute each of the two curves in the cycle using the CM method and Sage \cite{sagemath}.
\begin{subequations}
\begin{align}
E_1/\mathbb{F}_{5}\,&:\,y^2 = x^3 + 4x + 2\,,\\
E_2/\mathbb{F}_{3}\,&:\,y^2 = x^3 + 2x^2 + 1\,.
\end{align}
\end{subequations}
We list all the points of these curves in \cref{table:mnt-2-cycle-points}.
\end{example}

\begin{example}
We give an example of an MNT $(6,4,6,4)$-cycle, using the parametrization in \cref{table:mnt-4-cycles-polys}. If $x=3$, we check that $4x^2 + 1=37$, $4x^2 +2x+1=43$ and $4x^2-2x+1 = 31$ are all prime. We compute the curves using Sage \cite{sagemath}.
\begin{subequations}
\begin{align}
E_1/\mathbb{F}_{37} \,&:\, y^2 = x^3 + 24x + 16\,,\\
E_2/\mathbb{F}_{43}\,&:\,y^2 = x^3 + 36x + 5\,, \\
E_3/\mathbb{F}_{37}\,&:\, y^2 = x^3 + 22x + 27\,,\\
E_4/\mathbb{F}_{31}\,&:\, y^2 = x^3 + 26x + 21\,.
\end{align}
\end{subequations}
We list all the points of these curves in \cref{table:mnt-4-cycle-points}.
\end{example}

\section{Two-cycles of specific embedding degrees}
\label{section:degree4}

In this section we prove the following result.

\begin{proposition}
\label{prop:degree4-main}
There are no $(5, 10)$-, $(8, 8)$-, or $(12, 12)$-cycles.
\end{proposition}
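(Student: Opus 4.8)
The plan is to exploit the observation (hinted at in the introduction) that for $k \in \{5,8,12\}$ the cyclotomic polynomial $\Phi_k$ has degree $4$ and satisfies $\Phi_{k}(x) = \Phi_{2k}(-x)$, and more to the point that $\Phi_5(x) = x^4+x^3+x^2+x+1$, $\Phi_8(x) = x^4+1$, $\Phi_{12}(x) = x^4-x^2+1$ are all even-ish quartics whose behavior we can control. Suppose $E_1/\mathbb{F}_{q_1}$ and $E_2/\mathbb{F}_{q_2}$ form a $(k,k')$-cycle with $(k,k') \in \{(5,10),(8,8),(12,12)\}$; write $t_1, t_2$ for the traces, so $q_2 = \#E_1(\mathbb{F}_{q_1}) = q_1 + 1 - t_1$ and $q_1 = q_2 + 1 - t_2$, whence $t_1 + t_2 = 2$ by \cref{lem:sum-of-traces}. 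By \cref{lem:phi-t-1}, $q_2 = n_1$ divides $\Phi_{k}(t_1 - 1)$ and $q_1 = n_2$ divides $\Phi_{k'}(t_2 - 1)$. Setting $a := t_1 - 1$, we have $t_2 - 1 = 1 - t_1 = -a$, so the two divisibilities become $q_2 \mid \Phi_k(a)$ and $q_1 \mid \Phi_{k'}(-a)$, and in all three cases $\Phi_{k'}(-a) = \Phi_k(a)$ (for $(8,8)$ and $(12,12)$ because the polynomial is even; for $(5,10)$ because $\Phi_{10}(-a) = \Phi_5(a)$). So both primes divide the single integer $N := \Phi_k(a)$, which is a monic quartic in $a$.

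Next I would turn this into a size bound. Since $E_1/\mathbb{F}_{q_1}$ is pairing-friendly with embedding degree $k$, its order $q_2 = n_1 \ge q_1 + 1 - 2\sqrt{q_1}$ by Hasse, so $q_1$ and $q_2$ are of comparable size; likewise $a = t_1 - 1$ satisfies $|a| \le 2\sqrt{q_1} + 1$, so $|a| = O(\sqrt{q_1})$ and $N = \Phi_k(a) = O(q_1^2)$. On the other hand $q_1 q_2 \mid N$ once we check $q_1 \ne q_2$ — and $q_1 = q_2$ forces $t_1 = t_2 = 1$, i.e. $a = 0$, giving $N = \Phi_k(0) \in \{1,1,1\}$, impossible since $q_1 \mid N$. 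Given $q_1 \ne q_2$ and both prime, $q_1 q_2 \mid \Phi_k(a)$, so $q_1 q_2 \le |\Phi_k(a)|$. Combining with $q_1 q_2 \ge (q_1 - 2\sqrt{q_1})q_1$ and $|\Phi_k(a)| \le (2\sqrt{q_1}+1)^4 + \text{lower order}$ gives an inequality of the shape $q_1^2(1 - o(1)) \le 16 q_1^2(1 + o(1))$, which is not yet a contradiction — so a cruder bound is not enough, and this is where the real work lies.

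To close the gap I would instead analyze the equation $q_1 q_2 c = \Phi_k(a)$ for a positive integer cofactor $c$ directly, substituting $q_2 = q_1 + 1 - t_1 = q_1 - a$ and treating it as a quadratic in $q_1$: $q_1^2 - a q_1 - \Phi_k(a)/c = 0$, so $q_1 = \tfrac{1}{2}\bigl(a + \sqrt{a^2 + 4\Phi_k(a)/c}\bigr)$, which forces $a^2 + 4\Phi_k(a)/c$ to be a perfect square, say $w^2$. For $c = 1$ this is a quartic-in-$a$ equalling a square; for each of the three polynomials $\Phi_k$, the quartic $4\Phi_k(a) + a^2$ has leading coefficient $4$, so $w$ is close to $2a^2$, and $(2a^2 + \alpha a + \beta)^2$ for small integer $\alpha,\beta$ can be matched against $4\Phi_k(a) + a^2$ to pin down finitely many candidate $a$ (plus the boundary cases where the square lies strictly between consecutive candidate squares, which rules out all large $|a|$). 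One then checks the finitely many small $a$ by hand: each yields either a non-prime $q_1$ or $q_2$, or a supersingular/degenerate curve, or a curve whose embedding degree is not actually $k$. For $c > 1$ the term $4\Phi_k(a)/c$ shrinks, making $a^2 + 4\Phi_k(a)/c$ lie strictly between $(|a|-1)^2$ and $(|a|+1)^2$ for all large $|a|$ and hence never a square except in a short range, again reducing to a finite check. The main obstacle is precisely this Pell-type / square-detection step: showing that $4\Phi_k(a) + a^2$ (and its $c$-scaled variants) is a perfect square only for finitely many explicitly bounded $a$, and then carrying out the finite verification that none of those survive the primality and embedding-degree constraints — the degree-$4$ hypothesis is exactly what makes the "trap between consecutive squares" argument succeed, which is why the method does not extend to $\deg\Phi_k > 4$.
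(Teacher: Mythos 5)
Your reduction is essentially the paper's: with $a = t_1 - 1 = q_1 - q_2$ you correctly derive $q_1 q_2 \mid \Phi_{k_1}(a)$ from $\Phi_{k_1}(x) = \Phi_{k_2}(-x)$, introduce a cofactor, and turn the cycle condition into the requirement that the discriminant of a quadratic (equivalently, a quartic in $a$) be a perfect square --- this is \cref{lem:degree4-eq}. Your sandwiching of $4\Phi_k(a) + a^2$ between squares of integer quadratics in $a$ is sound and disposes of the cofactor-$1$ case (with a little care for both signs of $a$). The genuine gap is the cofactor $c>1$ branch. The claim that $a^2 + 4\Phi_k(a)/c$ ``lies strictly between $(|a|-1)^2$ and $(|a|+1)^2$ for all large $|a|$'' is false: for fixed $c$ the term $4\Phi_k(a)/c \sim 4a^4/c$ dominates $a^2$, so the quantity is of order $a^4$, not $a^2$. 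What is true --- and what your own Hasse-bound computation nearly establishes, cf.\ \cref{lem:degree4-quartics} --- is that the cofactor is bounded by about $16$ once $|a|$ is large. But for a fixed non-square cofactor the relevant equation $c^2a^2 + 4c\Phi_k(a) = y^2$ has leading coefficient $4c$, so $y$ must sit near $2\sqrt{c}\,a^2$ and there is no integer quadratic in $a$ to trap it against: the equation is a genuine genus-$1$ quartic, for which Siegel's theorem gives finiteness but no effective list. This is exactly where the paper needs more machinery: it first invokes \cref{lem:q-mod-n} to force every prime divisor of the cofactor to be $\equiv 1 \pmod{k_1}$, cutting the surviving cofactors to $11$ and $13$, and then resolves those cases with Magma's \texttt{IntegralQuarticPoints} (for $(12,12)$, $d=13$) and a congruence obstruction modulo $16$ (for $(5,10)$, $d=11$). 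You use neither the congruence restriction on the cofactor nor any tool capable of handling the resulting quartics, so the $c>1$ cases remain open in your argument.

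A second, smaller gap: the finitely many small parameter values (the paper's regime $c \le 82$, where the cofactor bound does not yet apply) are dismissed with ``one then checks\ldots by hand,'' but no such check is exhibited; the paper carries out this enumeration computationally using $q_1, q_2 \mid \Phi_{k_1}(c)$. As written, your proposal establishes the proposition only for cycles with cofactor $1$.
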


The pairs $(5, 10),(8, 8),(12, 12)$ are precisely the pairs $(k_1, k_2)$ whose cyclotomic polynomials satisfy $\Phi_{k_1}(x) = \Phi_{k_2}(-x)$ and $\deg \Phi_{k_1}(x) = 4$.  To prove \cref{prop:degree4-main}, we first use these conditions to reduce from the problem of classifying $(k_1, k_2)$-cycles to that of finding integral points on a few quartic curves, with finitely many exceptions, in \cref{lem:degree4-quartics}. We then classify all integral points on these quartic curves and the finitely many exceptions using computational tools, yielding no actual $(k_1, k_2)$-cycles.

Note that in the case of 2-cycles, when we require nontrivial embedding degrees, the two curves cannot have equal field sizes.\footnote{Even when allowed, curves $E/\mathbb{F}_q$ with $q = \sharp E(\mathbb{F}_q)$, known as \textit{anomalous}, are undesirable because discrete logarithms can be computed in polynomial time via the SSSA attack \cite{semaev, smart, satoh_araki}.}

We first prove the following more general result, which we hope will also have applications to other kinds of 2-cycles.

\begin{lemma}
\label{lem:degree4-eq}
Let $(k_1, k_2)$ satisfy $\Phi_{k_1}(x) = \Phi_{k_2}(-x)$.  Let $E_1/\mathbb{F}_{q_1}, E_2/\mathbb{F}_{q_2}$ be a $(k_1, k_2)$-cycle with $q_1 > q_2$, and let $c = q_1 - q_2$.  Then $q_1q_2 \mid \Phi_{k_1}(c)$.  Additionally, for some integer $d$ whose prime divisors are all congruent to $1 \pmod{k_1}$, there is an integer $y$ such that
\begin{equation}
\label{eq:degree4-plane-curve}
y^2 = c^2d^2 + 4d\Phi_{k_1}(c)\,.
\end{equation} 
\end{lemma}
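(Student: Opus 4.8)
The plan is to turn the embedding-degree hypotheses into divisibility statements about $\Phi_{k_1}(c)$, read off $d$ and $y$ from an elementary identity, and then control the prime factorization of $d$ using \cref{lem:q-mod-n}. First I would compute the traces: from $\#E_1(\mathbb{F}_{q_1}) = q_2$ and $\#E_2(\mathbb{F}_{q_2}) = q_1$ we get $t_1 = q_1 + 1 - q_2 = c+1$ and $t_2 = q_2 + 1 - q_1 = 1-c$, so $t_1 - 1 = c$ and $t_2 - 1 = -c$. Applying \cref{lem:phi-t-1} to $E_1$ (which has prime order $n_1 = q_2$) gives $q_2 \mid \Phi_{k_1}(c)$, and applying it to $E_2$ (prime order $n_2 = q_1$) gives $q_1 \mid \Phi_{k_2}(-c)$, which equals $\Phi_{k_1}(c)$ by the hypothesis $\Phi_{k_1}(x) = \Phi_{k_2}(-x)$. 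Since $q_1 \neq q_2$ are prime, this proves $q_1 q_2 \mid \Phi_{k_1}(c)$, the first assertion.

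Next I would set $d := \Phi_{k_1}(c)/(q_1 q_2)$, which is a positive integer since $\Phi_{k_1}(c) > 0$ for $c \geq 1$ and $k_1 \geq 2$. Then \cref{eq:degree4-plane-curve} is forced by the identity $(q_1+q_2)^2 = (q_1-q_2)^2 + 4q_1q_2 = c^2 + 4q_1q_2$: multiplying through by $d^2$ and substituting $d q_1 q_2 = \Phi_{k_1}(c)$ yields
\begin{equation*}
\bigl(d(q_1+q_2)\bigr)^2 \;=\; c^2 d^2 + 4 d^2 q_1 q_2 \;=\; c^2 d^2 + 4 d\,\Phi_{k_1}(c)\,,
\end{equation*}
so one may take $y := d(q_1+q_2)$.

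It then remains to show that every prime divisor $p$ of $d$ satisfies $p \equiv 1 \pmod{k_1}$. Since $p \mid \Phi_{k_1}(c)$, by \cref{lem:q-mod-n} it is enough to rule out $p \mid k_1$. For this I would use that the embedding degree of a prime-order curve is exactly the multiplicative order of the field size modulo the group order, so $k_1 \mid q_2 - 1$ and $k_2 \mid q_1 - 1$; in particular $q_1, q_2$ are odd primes and $q_2 \equiv 1 \pmod{k_1}$. I would then establish $k_1 \mid c$ by a short case analysis of the pairs satisfying $\Phi_{k_1}(x) = \Phi_{k_2}(-x)$ — these are exactly $k_1 = k_2$ with $4 \mid k_1$, or $\{k_1, k_2\} = \{m, 2m\}$ with $m$ odd — checking in each case that $q_1 \equiv 1 \pmod{k_2}$ together with the parity of $q_1$ forces $q_1 \equiv 1 \pmod{k_1}$, whence $c = q_1 - q_2 \equiv 0 \pmod{k_1}$. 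Finally, since $\Phi_{k_1}$ has constant term $1$, we get $\gcd(c, \Phi_{k_1}(c)) = \gcd(c, \Phi_{k_1}(0)) = 1$, so no prime dividing $c$ — in particular no prime dividing $k_1$, because $k_1 \mid c$ — can divide $\Phi_{k_1}(c)$. Hence $p \nmid k_1$, and \cref{lem:q-mod-n} gives $p \equiv 1 \pmod{k_1}$.

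I expect the step $k_1 \mid c$ to be the main obstacle: it is the only place where the precise form of the hypothesis $\Phi_{k_1}(x) = \Phi_{k_2}(-x)$ is used, rather than just an abstract relation between $\Phi_{k_1}$ and $\Phi_{k_2}$, and it relies both on classifying the pairs $(k_1, k_2)$ for which such an equality can hold and on a parity argument for $q_1$. Everything else is routine modular arithmetic together with the single identity $(q_1+q_2)^2 = c^2 + 4q_1q_2$.
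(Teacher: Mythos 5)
Your proposal is correct and follows the same skeleton as the paper's proof: both derive $q_2\mid\Phi_{k_1}(c)$ and $q_1\mid\Phi_{k_2}(-c)=\Phi_{k_1}(c)$ from the embedding-degree conditions (you via \cref{lem:phi-t-1} and the traces, the paper via \cref{lem:k-phi} and reduction mod $q_2$, which amount to the same computation), and both extract \cref{eq:degree4-plane-curve} from $dq_1q_2=\Phi_{k_1}(c)$ — the paper by requiring the discriminant of the quadratic $dq_2^2+cdq_2-\Phi_{k_1}(c)=0$ to be a square, you by exhibiting the explicit square root $y=d(q_1+q_2)$ via $(q_1+q_2)^2=c^2+4q_1q_2$; these are literally the same $y$. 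The one genuine difference is the last clause. The paper concludes $p\equiv 1\pmod{k_1}$ for $p\mid d$ directly from $p\mid\Phi_{k_1}(c)$ and \cref{lem:q-mod-n}, silently using the hypothesis $p\nmid k_1$ of that lemma. You fill this in: from $k_1\mid q_2-1$, $k_2\mid q_1-1$, the classification of pairs with $\Phi_{k_1}(x)=\Phi_{k_2}(-x)$, and a parity argument you get $k_1\mid c$, and since $\Phi_{k_1}(0)=1$ forces $\gcd(c,\Phi_{k_1}(c))=1$, no prime divisor of $k_1$ can divide $\Phi_{k_1}(c)$. This extra step is sound (and the classification of admissible $(k_1,k_2)$ you invoke is correct); it makes your write-up strictly more complete than the paper's, at the cost of being the only place where the specific structure of the pairs $(k_1,k_2)$ enters the proof of this lemma.
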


\begin{proof}
By \cref{lem:k-phi}, the condition that $E_1/\mathbb{F}_{q_1}$ has embedding degree $k_1$ implies that $q_2 \mid \Phi_{k_1}(q_1)$.  Then $q_2 \mid \Phi_{k_1}(q_1 - q_2)$ as well.  Similarly, $q_1 \mid \Phi_{k_2}(q_2 - q_1) = \Phi_{k_1}(q_1 - q_2)$.  It follows that $q_1q_2 \mid \Phi_{k_1}(q_1 - q_2) = \Phi_{k_1}(c)$ as $q_1$ and $q_2$ are distinct primes.

Then $dq_1q_2 = \Phi_{k_1}(c)$ for some integer $d$. Using $q_1 = q_2 + c$, we can rewrite this as
\begin{equation}
dq_2^2 + cdq_2 - \Phi_{k_1}(c) = 0\,.
\end{equation}
For this quadratic equation in $q_2$ to have an integral solution, the discriminant
\begin{equation}
c^2d^2 + 4d\Phi_{k_1}(c)
\end{equation}
must be a perfect square, so that there is a $y$ satisfying \cref{eq:degree4-plane-curve}.

Also, for any prime $p \mid d$, the above relation $dq_1q_2 = \Phi_{k_1}(c)$ implies that $p \mid \Phi_{k_1}(c)$.  Hence $p \equiv 1 \pmod{k_1}$ by \cref{lem:q-mod-n}.
\end{proof}

\begin{lemma}
\label{lem:degree4-quartics}
In the situation of \cref{lem:degree4-eq}, additionally let $\deg \Phi_{k_1}(x) = 4$.  Equivalently, let $(k_1, k_2) \in \{(5, 10), (8, 8), (10, 5), (12, 12)\}$.  Then $c \le 82$ or $1 \le d \le 16$.
\end{lemma}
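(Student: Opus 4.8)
The plan is to unpack the consequences of \cref{lem:degree4-eq}. From the cycle equations we have $\Phi_{k_1}(c)=d\,q_1q_2$ with $q_1=q_2+c$ and $q_1,q_2$ prime, so $d\mid\Phi_{k_1}(c)$; since $\deg\Phi_{k_1}=4$ forces $k_1\in\{5,8,10,12\}$, every prime dividing $d$ is $\equiv1\pmod{k_1}$ by \cref{lem:q-mod-n}, hence odd, so $d$ is odd and $q_1,q_2$ are odd. Writing $z:=q_1+q_2$ one has $y=dz$ and the clean form
\[
d\,z^{2}=d\,c^{2}+4\Phi_{k_1}(c),\qquad\text{equivalently}\qquad z^{2}-c^{2}=4\Phi_{k_1}(c)/d,
\]
with $z>c>0$ and $z\equiv c\pmod 2$. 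For a fixed value of $d$ this is one quartic curve in $(c,z)$; when $d\le16$ the constraint on the prime divisors of $d$ leaves only $d=1$ (if $k_1=8$) or $d\in\{1,11,13\}$ (if $k_1\in\{5,10,12\}$), and these finitely many quartics are exactly the ones the lemma allows and that the following step disposes of computationally. So the content of the lemma is the implication $d\ge17\Rightarrow c\le82$, which I would obtain by bounding $c$.

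The first step is to clear the quartic by completing the square in $c$. Multiplying $dz^{2}=dc^{2}+4\Phi_{k_1}(c)$ by $16$ and completing the square gives, for each of the four cyclotomic polynomials, an identity of the shape
\[
\bigl(8c^{2}+4a_3 c+\gamma(d)\bigr)^{2}-16d\,z^{2}\;=\;(d-\lambda)\bigl(\mu c+\nu(d)\bigr),
\]
where $a_3\in\{-1,0,1\}$ is the cubic coefficient of $\Phi_{k_1}$, $\gamma,\nu$ are linear in $d$, and $\lambda,\mu$ are small non-negative integers depending only on $k_1$ (for $k_1=5$ this is $(8c^{2}+4c+d+3)^{2}-16dz^{2}=(d-5)(8c+d+11)$; for $k_1=8$ it is $(8c^{2}+d)^{2}-16dz^{2}=(d-8)(d+8)$, and $k_1=10,12$ are similar with $\lambda=5,12$). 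For $d\ge17$ the right-hand side has constant sign, so $16d\,z^{2}$ lies strictly between two consecutive values of $\bigl(8c^{2}+4a_3c+\gamma(d)+j\bigr)^{2}$ once $c$ exceeds an explicit $O(d)$ threshold. When $d$ is a perfect square, $16d\,z^{2}$ is itself a perfect square, so it must equal $\bigl(8c^{2}+4a_3c+\gamma(d)-\ell\bigr)^{2}$ for some integer $\ell\ge1$; substituting back, the identity forces $16c^{2}\le(d-\lambda)(\mu c+\nu(d))$, hence $c=O(d)$, and then — since the small complementary divisor in the resulting factorization must divide a power of $2$ and therefore equal $1$, as $d$ is odd — it reduces to a Pell-type equation in $d$ alone with no admissible solution $d\ge17$. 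So the perfect-square case contributes nothing.

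The main obstacle is $d\ge17$ \emph{not} a perfect square: then $16d\,z^{2}$ is never a perfect square, the naive squeeze between consecutive polynomial squares no longer closes, and one is left with a genuine generalized Pell equation $X^{2}-D\,Y^{2}=N(c)$ where $X=8c^{2}+4a_3c+\gamma(d)$, $D$ is the squarefree part of $16d$, $Y$ is proportional to $z$, and $N(c)$ has degree $\le1$ in $c$, hence is $o(X)$. From $|X-\sqrt{D}\,Y|=O(1/\sqrt X)$ the pair $(X,Y)$ must sit among the continued-fraction convergents of $\sqrt D$; combining this with $d\mid\Phi_{k_1}(c)$, with the congruences $q_1\equiv q_2\equiv1\pmod{k_1}$ (which force $k_1\mid c$), and with the conditions $z>c$, $z\equiv c\pmod2$, and the primality of $q_1,q_2$, should cut the admissible $(c,z)$ down to a short explicit list for each $d$, from which $c\le82$ falls out; alternatively one invokes an effective bound (Runge's method applied after the square-completion in the perfect-square regime, plus a direct analysis of the residual Pell equations) and finishes by a finite search. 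The laborious part is making every constant explicit across the four embedding-degree pairs and checking that the single pair of thresholds $c\le82$, $d\le16$ is uniform; the conceptual content is the reduction to a generalized Pell equation and the observation that the arithmetic side conditions leave essentially no room.
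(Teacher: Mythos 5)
Your proposal does not establish the lemma. The statement to prove is: if $c \ge 83$ then $1 \le d \le 16$, uniformly over all cycles. You correctly identify this as the content, but your argument then fixes $d$ and tries to bound $c$ for that single $d$ via a reduction to a generalized Pell equation $X^2 - DY^2 = N$. This cannot work as stated: for fixed non-square $16d$ such an equation, if solvable, has infinitely many integer solutions with $X$ unbounded, so nothing about the Pell structure alone bounds $c$; you would have to bring in the auxiliary arithmetic conditions (primality of $q_1,q_2$, the divisibility $d \mid \Phi_{k_1}(c)$, the congruences mod $k_1$) and make them effective, which you explicitly do not do --- the decisive steps are phrased as ``should cut the admissible $(c,z)$ down to a short explicit list'' and ``alternatively one invokes an effective bound \ldots and finishes by a finite search.'' Worse, even if that program succeeded for each fixed $d$, you would still need the resulting bound on $c$ to be \emph{uniform} over the infinitely many $d \ge 17$ (your own thresholds $\gamma(d), \nu(d)$ grow with $d$), and no such uniformity is argued. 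The perfect-square-$d$ case is also not load-bearing: for $k_1 \in \{5,8,10,12\}$ every prime factor of $d$ is $\equiv 1 \pmod{k_1}$, so square values of $d$ are rare and the generic case is exactly the one you leave open.

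The ingredient you are missing is the Hasse bound, which you never invoke. Since $E_2/\mathbb{F}_{q_2}$ has $q_1$ points, $|q_1 - (q_2+1)| \le 2\sqrt{q_2}$, and substituting $c = q_1 - q_2$ gives $q_2 \ge (c-1)^2/4$, hence also $q_1 \ge (c-1)^2/4$ as $q_1 > q_2$. Plugging into $dq_1q_2 = \Phi_{k_1}(c)$ yields
\begin{equation*}
d \;<\; 16\,\frac{\Phi_{k_1}(c)}{(c-1)^4}\,,
\end{equation*}
and since $\Phi_{k_1}$ is monic of degree $4$ the right-hand side tends to $16$; a direct check shows it is below $17$ already for $c \ge 83$ and each $k_1 \in \{5,8,10,12\}$. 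That one observation replaces the entire square-completion/Pell apparatus. (Your preliminary reductions --- $y = dz$ with $z = q_1+q_2$, the parity and divisibility remarks, and the enumeration of admissible $d \le 16$ --- are fine and consistent with what the paper uses in the subsequent computational step, but they are not where the difficulty of this lemma lies.)
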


\begin{proof}
Let $c \ge 83$.  Then $\Phi_{k_1}(c)> 0$, so the relation $dq_1q_2 = \Phi_{k_1}(c)$ implies $d \ge 1$.  Next, because $E_2/\mathbb{F}_{q_2}$ has $q_1$ points, the Hasse bound implies $|q_1 - (q_2 + 1)| \le 2\sqrt{q_2}$.  Substituting $c = q_1 - q_2$ and rearranging shows $q_2 \ge (c-1)^2/4$.  The same holds for $q_1$ since $q_1 > q_2$.  Then $dq_1q_2 = \Phi_{k_1}(c)$ implies
\begin{equation}
\label{eqn:bound-d}
d < 16\frac{\Phi_{k_1}(c)}{(c-1)^4} \,.
\end{equation}
For each $k_1 \in \{5, 8, 10, 12\}$, we find that for $c \ge 83$, the right-hand side is at most 17.  Thus either $c \le 82$ or $1 \le d \le 16$.
\end{proof}

For each $(k_1, k_2)$ listed in \cref{lem:degree4-quartics}, using the fact $q_1, q_2 \mid \Phi_{k_1}(c)$ from \cref{lem:degree4-eq}, one can see that the case $c \le 82$ yields only finitely many $(k_1, k_2)$-cycles.  Also, for each $1 \le d \le 16$ whose prime divisors are congruent to $1 \pmod{k_1}$, one can show that \cref{eq:degree4-plane-curve} defines a plane curve of genus 1 in the coordinates $(c, y)$.  Siegel's Theorem \cite[Theorem 8.2.4]{lang_diophantine_geometry} implies that such a curve has only finitely many integral points, hence there are only finitely many $(k_1, k_2)$-cycles.

We now use computational tools to show that there are in fact no $(k_1, k_2)$-cycles.

\begin{proof}[Proof of \cref{prop:degree4-main}]
Using the fact $q_1, q_2 \mid \Phi_{k_1}(c)$ from \cref{lem:degree4-eq}, it is easy to enumerate all $(k_1, k_2)$-cycles which have $c \le 82$, for $(k_1, k_2) \in \{(5, 10), (8, 8), (10, 5), (12, 12)\}$.  Doing so using Sage \cite{sagemath} reveals no such examples.

We now consider the case $d \le 16$.  Restricting to values of $d$ whose prime factors are all congruent to $1 \pmod{k_1}$, we are left with the cases shown in \cref{table:degree4-main-cases}.

\begin{table}[h]
\caption{Cases $((k_1, k_2), d)$ satisfying \cref{lem:degree4-quartics} when $c \ge 83$.}
\label{table:degree4-main-cases}
\begin{center}
\begin{tabular}{|c|c|}
\hline
$(k_1, k_2)$ & $d$ \\ \hline
$(5, 10)$ & 11 \\ \hline
$(10, 5)$ & 13\\ \hline
$(12, 12)$ & 13\\ \hline
\end{tabular}
\end{center}
\end{table}

In the case $(k_1, k_2) = (12, 12)$, $d = 13$, we can enumerate the integral points of \cref{eq:degree4-plane-curve} using Magma's \texttt{IntegralQuarticPoints} function \cite{magma}.  Doing so gives no examples with $c \ge 83$.
  
When $(k_1, k_2) = (5, 10)$ or $(10, 5)$ and $d = 11$, Sage \cite{sagemath} finds that \cref{eq:degree4-plane-curve} has no solutions over the ring of integers modulo 16, hence it has no integral solutions.  Thus these cases also give no examples.
\end{proof}

When $\deg \Phi_{k_1}(x) > 4$, the bound on $d$ in \cref{eqn:bound-d} no longer converges to a finite value as $c \rightarrow \infty$, so we cannot reduce to finding integral points on a finite number of curves as above.  It would be interesting to find more general arguments which work for higher-degree cyclotomic polynomials, such as the case of $(16, 16)$-cycles, where $\Phi_{k_1}(x) = \Phi_{k_2}(x) = x^8 + 1$.

\section{Cycles with the same discriminant}
\label{section:discriminant}

In this section we show that if we construct cycles from elliptic curves of the same discriminant $D$, then the length of the cycle must be small. This implies that to construct elliptic curves from polynomial families, we cannot use families with a fixed discriminant. The results in this section are \emph{independent} of the embedding degrees of the elliptic curves.

We first show that any 2-cycle of ordinary elliptic curves consists of curves with the same
discriminant.

\begin{proposition}
\label{prop:2cyclecm}
Let $E_1/\mathbb{F}_{q_1}, E_2/\mathbb{F}_{q_2}$ be a 2-cycle of ordinary elliptic curves. Then they both have the same discriminant for complex multiplication.
\end{proposition}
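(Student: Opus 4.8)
The plan is to use the standard fact that, for an ordinary curve $E/\mathbb{F}_q$ with trace $t$, the CM discriminant is the squarefree part of the positive integer $4q - t^2$: the Frobenius endomorphism $\pi$ has characteristic polynomial $X^2 - tX + q$, so $\mathbb{Z}[\pi] \subseteq \mathrm{End}(E)$ is an order in $\mathbb{Q}(\pi) = \mathbb{Q}(\sqrt{t^2 - 4q}) = \mathbb{Q}(\sqrt{-(4q - t^2)})$, and writing $4q - t^2 = Dy^2$ with $D$ squarefree and positive identifies $\mathbb{Q}(\sqrt{-D})$ as the CM field (here $4q - t^2 > 0$ by Hasse since $E$ is ordinary). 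Thus it suffices to show that $4q_1 - t_1^2$ and $4q_2 - t_2^2$ have the same squarefree part, and in fact I expect to show they are equal as integers.

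First I would translate the cycle condition into relations among the parameters. From $\#E_1(\mathbb{F}_{q_1}) = q_2$, $\#E_2(\mathbb{F}_{q_2}) = q_1$, and $\#E_i(\mathbb{F}_{q_i}) = q_i + 1 - t_i$, we obtain $t_1 = q_1 + 1 - q_2$ and $t_2 = q_2 + 1 - q_1$. In particular $t_1 + t_2 = 2$ (the $m = 2$ case of \cref{lem:sum-of-traces}), equivalently $t_2 = 2 - t_1$, and also $q_2 = q_1 + 1 - t_1$.

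Next I would substitute to compare the two discriminant quantities. Using $q_2 = q_1 + 1 - t_1$,
\[ 4q_2 - t_2^2 = 4q_1 + 4 - 4t_1 - t_2^2 . \]
Since $t_2 = 2 - t_1$ gives $t_2^2 = t_1^2 - 4t_1 + 4$, substituting yields
\[ 4q_2 - t_2^2 = 4q_1 + 4 - 4t_1 - (t_1^2 - 4t_1 + 4) = 4q_1 - t_1^2 . \]
Hence the two positive integers $4q_1 - t_1^2$ and $4q_2 - t_2^2$ coincide. Writing this common value as $D_1 y_1^2 = D_2 y_2^2$ with $D_1, D_2$ squarefree and positive forces $D_1 = D_2$, so $E_1$ and $E_2$ both have complex multiplication by $\mathbb{Q}(\sqrt{-D_1})$, i.e., the same discriminant.

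I do not anticipate a genuine obstacle: the argument is a short computation that never references the embedding degrees. The only points needing care are invoking ordinariness so that $4q_i - t_i^2 > 0$ (placing us in an imaginary quadratic field via Hasse), and the elementary fact that the squarefree part of a positive integer is well defined, which is exactly what lets us pass from ``equal integers'' to ``equal discriminants.''
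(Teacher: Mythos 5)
Your proof is correct and follows essentially the same route as the paper: derive $t_1+t_2=2$ and $q_2=q_1+1-t_1$ from the cycle condition, compute $4q_2-t_2^2=4q_1-t_1^2$, and conclude by equality of squarefree parts. The extra justification you give for why the discriminant is the squarefree part of $4q-t^2$ is a welcome addition but not a difference in method.
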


\begin{proof}
Let $t_i$ be the trace of $E_i$ for each $i$. Then $q_2 = q_1 + 1 -t_1$ and $q_1 = q_2 + 1 -t_2$. This implies $t_1+t_2=2$, and
\begin{align*}
4q_2 - t_2^2 = 4(q_1+1-t_1) - (2-t_1)^2 = 4q_1-t_1^2\,.
\end{align*}
The discriminant of $E_i$ is the squarefree part of $4q_i-t_i^2$, so the two curves have the same discriminant.
\end{proof}

The converse is also true if $D>3$, as shown in \cite[Corollary 6.2]{ss} and \cite[Theorem 3.4]{reciprocity}. We present an adapted version of the proof below.

\begin{proposition}
\label{prop:cyclecm}
Let $D>3$ be a squarefree integer such that $-D\equiv 0,1\pmod{4}$. Suppose that we have an $m$-cycle of ordinary elliptic curves $E_1/\mathbb{F}_{q_1},\ldots, E_m/\mathbb{F}_{q_m}$ such that each elliptic curve has discriminant $D$ and $q_1,\ldots,q_m$ are distinct primes. Then $m\leq 2$.
\end{proposition}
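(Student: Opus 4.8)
The plan is to exploit the fact that curves with complex multiplication discriminant $D$ over $\mathbb{F}_{q_i}$ correspond, via the theory of CM, to ideals in the imaginary quadratic order $\mathcal{O}$ of discriminant $-D$ (or $-4D$). Concretely, the cycle condition together with $4q_i - t_i^2 = D y_i^2$ means each $q_i$ splits in $\mathcal{O}$, so we may write $q_i \mathcal{O} = \mathfrak{q}_i \overline{\mathfrak{q}_i}$ for a prime ideal $\mathfrak{q}_i$ of norm $q_i$. The relation $\#E_i(\mathbb{F}_{q_i}) = q_{i+1}$ says $q_{i+1} = q_i + 1 - t_i = N(\mathfrak{q}_i - 1)$ in the sense that the element $\pi_i - 1$ of norm $q_{i+1}$ lies in $\mathfrak{q}_i$, where $\pi_i$ is a Weil number (generator-up-to-units of $\mathfrak{q}_i$, viewed inside $\mathcal{O}$) with $\pi_i \overline{\pi_i} = q_i$ and $\pi_i + \overline{\pi_i} = t_i$. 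So I would first translate the whole cycle into a statement about a sequence of elements $\pi_1, \dots, \pi_m \in \mathcal{O}$ with $N(\pi_i) = q_i$ and $N(\pi_i - 1) = q_{i+1}$, indices mod $m$.

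The key idea is then a norm/size estimate. Since $D > 3$, the only units in $\mathcal{O}$ are $\pm 1$, which rigidifies the correspondence between ideals and elements. From $N(\pi_i - 1) = q_{i+1}$ and $N(\pi_i) = q_i$ I get $|\pi_i - 1|^2 = q_{i+1}$ and $|\pi_i|^2 = q_i$ (complex absolute value), so $\big|\,|\pi_i| - |\pi_{i+1}|\,\big| \le \big| \,|\pi_i| - |\pi_i - 1| \,\big| \le 1$ by the triangle inequality; hence $|\sqrt{q_i} - \sqrt{q_{i+1}}| \le 1$ for all $i$. That alone is not enough, so next I would look at the ideals themselves: the ideal $\mathfrak{q}_{i+1}$ (norm $q_{i+1}$) divides $(\pi_i - 1)$, while $\mathfrak{q}_i = (\pi_i)$ up to conjugation. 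Multiplying the cycle relations, $\prod_i (\pi_i - 1)$ has norm $\prod_i q_{i+1} = \prod_i q_i = \prod_i N(\pi_i)$, and each $\mathfrak{q}_{i+1} \mid (\pi_i - 1)$. Because the $q_i$ are distinct primes, the prime ideals $\mathfrak{q}_i$ are distinct, so comparing factorizations forces $(\pi_i - 1) = \mathfrak{q}_{i+1}\,\mathfrak{a}_i$ with $N(\mathfrak{a}_i) = q_i$, and $\mathfrak{a}_i$ must be $\mathfrak{q}_i$ or $\overline{\mathfrak{q}_i}$; a short argument (using that $\pi_i - 1 \equiv -1$ generates the same conjugacy behaviour, or directly that $\overline{\pi_i} \mid \pi_i - 1$ would force $q_i \mid \pi_i - 1 + \overline{\pi_i} - 1 + 1 = t_i - 1$, impossible by Hasse for $q_i$ large) pins down which. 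This yields a recursion $\pi_{i+1} = \pm(\pi_i - 1)/\overline{\pi_i} \cdot (\text{unit})$, i.e. essentially $\pi_{i+1} \overline{\pi_i} = \pm(\pi_i - 1)$ up to sign, an exact algebraic relation in $\mathcal{O}$.

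The hard part — and the main obstacle — is turning these divisibility relations into the bound $m \le 2$: one must show the recursion cannot close up into a longer cycle of distinct primes. I expect the cleanest route is to feed the exact relation back into the size estimate: writing $\pi_i = \sqrt{q_i}\, e^{\mathrm{i}\theta_i}$, the relation $\pi_{i+1}\overline{\pi_i} = \pm(\pi_i - 1)$ gives $\sqrt{q_i q_{i+1}}\, e^{\mathrm{i}(\theta_{i+1} - \theta_i)} = \pm(\sqrt{q_i}e^{\mathrm{i}\theta_i} - 1)$; taking norms recovers $q_{i+1} = q_i - 2\sqrt{q_i}\cos\theta_i + 1$, and taking imaginary parts gives $\sqrt{q_{i+1}}\sin(\theta_{i+1} - \theta_i) = \pm\sin\theta_i \cdot \sqrt{q_i}/\sqrt{q_i}$... more usefully, since $\mathrm{Im}(\pi_i) = \mathrm{Im}(\pi_i - 1)$, one gets $\sqrt{q_i}\sin\theta_i = \sqrt{q_i q_{i+1}}\sin(\theta_{i+1}-\theta_i)$ up to sign, so $\sin(\theta_{i+1} - \theta_i) = \pm \sin\theta_i / \sqrt{q_{i+1}}$. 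The point is that $D$ fixes $\mathrm{Im}(\pi_i) = y_i\sqrt{D}/2$ with $y_i$ a positive integer, and $y_i^2 = (4q_i - t_i^2)/D$; combining $t_{i+1} = t_i - (\text{something})$-type relations from the cycle with the $y_i$ being bounded integers, one derives that the $t_i$ (equivalently the $\cos\theta_i$) are forced into a finite pattern, and then a direct case analysis — exactly as in the $D>3$ arguments of Silverman–Stange and of the reciprocity paper being cited — shows any cycle must have $t_1 + t_2 = 2$ with $m=2$, since $\sum t_i = m$ (Lemma on sum of traces) combined with the sign constraints leaves no room for $m \ge 3$. I would structure the final step as: reduce to the element recursion in $\mathcal{O}$; show it forces $|\pi_{i+1}| $ and $|\pi_i|$ to be "linked" so that the $q_i$ lie in a narrow window; use distinctness of the primes plus the unit group being $\{\pm 1\}$ to get an exact two-term relation; and close with the trace-sum identity to eliminate $m \ge 3$.
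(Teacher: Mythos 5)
Your central idea is the same as the paper's: for $D>3$ the unit group of the relevant order is $\{\pm 1\}$, so the representation $4q = t^2 + Dy^2$ of a prime $q$ is unique up to the signs of $t$ and $y$; since $\pi_i - 1$ has norm $q_{i+1}$ and trace $t_i - 2$, this forces $t_{i+1} = \pm(t_i-2)$. Up to that point your proposal is sound, though the detour through ideal factorizations, Weil-number angles $\theta_i$, and the relation $\pi_{i+1}\overline{\pi_i} = \pm(\pi_i-1)$ is much heavier than needed: the paper reaches $t_{i+1}=\pm(t_i-2)$ in one line by observing $4q_{i+1}-(t_i-2)^2 = 4q_i - t_i^2 = Dy_i^2$ and invoking the uniqueness.

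The genuine gap is the endgame. You explicitly label ``turning these divisibility relations into the bound $m\le 2$'' as the main obstacle and then defer to ``a direct case analysis --- exactly as in the $D>3$ arguments of Silverman--Stange and of the reciprocity paper being cited,'' which is the very statement to be proved. Moreover, the closing strategy you sketch --- combining the sign recursion with the trace-sum identity $\sum t_i = m$ --- does not suffice on its own: for $m=3$ with the minus sign chosen at every step one gets $t_1=t_2=t_3=1$, which satisfies both the recursion and $\sum t_i = m$, and is excluded only because $t_i=1$ forces $q_{i+1}=q_i$, violating the hypothesis that the primes are distinct. The missing step is an argument that actually uses distinctness of the $q_i$. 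The paper supplies it by assuming $m\ge 3$, taking $q_2$ minimal in the cycle, and checking both signs of $t_1-2=\pm t_2$: the plus sign forces $t_2=0$ and hence $q_1=q_3$, while the minus sign forces $q_1=q_3$ directly. (Equivalently: the choice $t_{i+1}=2-t_i$ always yields $q_{i+2}=q_i$, so for $m\ge 3$ every sign must be plus, and then $t_{i+1}=t_i-2$ around the cycle gives $t_1=t_1-2m$, which is absurd.) You need to carry out one of these concluding arguments for the proof to be complete.
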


\begin{proof}
For each $i=1,\ldots,m$, let $y_i\in\mathbb{Z}$ be such that the CM equation $4q_i - t_i^2 = Dy_i^2$ is satisfied. Firstly, we note that if we fix $q_i$ and $D$, the solution $(t_i,y_i)$ to the CM equation is unique up to sign. This follows from the fact that, under our assumptions on $D$, the units in the ring of integers of $\mathbb{Q}(\sqrt{-D})$ are $\pm 1$, hence if two elements have the same norm, then they differ by a multiple of $\pm 1$.

Now let $E_i/\mathbb{F}_{q_i}$, $E_{i+1}/\mathbb{F}_{q_{i+1}}$ be two consecutive curves in the cycle. Since 
\begin{align}
4q_{i+1} - (t_i-2)^2 = 4(q_{i+1}-1+t_i) - t_i^2 = 4q_i - t_i^2 = Dy_i^2\,,
\end{align}
thus $t_i-2 = \pm t_{i+1}$, and $y_i = \pm y_{i+1}$, by the uniqueness of the solution to the CM equation. 

Suppose that $m\geq 3$. Without loss of generality, assume that $q_2$ is the smallest prime in the cycle. Then $q_{2} < q_1,q_3$. From the previous paragraph we also have $t_1-2 = \pm t_{2}$. We consider the two cases separately.

If $t_1-2 = t_{2}$, then $q_{2} = q_1 - 1-t_{2}$. So we have the inequalities $q_1 = q_{2} + 1 + t_{2} >q_{2}$, and $q_{3} = q_{2} + 1 -t_{2} > q_{2}$. Hence $1>t_{2} > -1$ so $t_{2}=0$. But this implies that $q_1=q_{3}$, which contradicts the assumption that the $q_i$'s are distinct.

If $t_1-2 = -t_{2}$, then $q_{2} = q_1 -1 + t_{2}$, so $q_1 = q_{2} + 1 -t_{2} = q_{3}$. This again contradicts the assumption that the $q_i$'s are distinct.
\end{proof}

For the case where $D=3$, we cite the following result from \cite{reciprocity}.

\begin{proposition}[{\cite[Theorem 3.4]{reciprocity}}]
Suppose that we have an $m$-cycle of ordinary elliptic curves $E_1/\mathbb{F}_{q_1},\ldots, E_m/\mathbb{F}_{q_m}$ such that each elliptic curve has discriminant $D$ and $q_1,\ldots,q_m$ are distinct primes. If $m\geq 3$, then $m=6$ and $D=3$.
\end{proposition}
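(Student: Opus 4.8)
The plan is to reduce to discriminant $D=3$ and then exploit the unusually large unit group of the Eisenstein integers. By \cref{prop:cyclecm} and minor variants of its proof (valid whenever the ring of integers of $\mathbb{Q}(\sqrt{-D})$ has unit group $\{\pm1\}$, i.e.\ $D\neq1,3$; the case $D=1$, where the units are $\{\pm1,\pm i\}$, is handled by a short separate argument), a cycle all of whose curves share a discriminant $D\neq3$ has length at most $2$. So it suffices to assume $D=3$: then all curves have $j$-invariant $0$ and Frobenius in the Eisenstein ring $R=\mathbb{Z}[\zeta_6]$, with unit group $\mu_6=\langle\zeta_6\rangle$, and we must show that a cycle of length $m\ge3$ with common discriminant $3$ has $m=6$.

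Write the Frobenius of $E_i$ as $\pi_i\in R$, so $N(\pi_i)=q_i$ and $\operatorname{Tr}(\pi_i)=t_i$; solvability of $4q_i-t_i^2=3y_i^2$ with $y_i\neq0$ forces $q_i\equiv1\pmod3$, and the finitely many configurations in which some $q_i$ falls below a small explicit bound are checked directly. The cycle condition is $q_{i+1}=\#E_i(\mathbb{F}_{q_i})=N(\pi_i-1)$; since $R$ is a PID and $q_{i+1}$ splits, the only elements of norm $q_{i+1}$ up to $\mu_6$ are $\pi_i-1$ and $\overline{\pi_i-1}$, so after a suitable choice of conjugates one obtains
\begin{align*}
\pi_{i+1}&=\zeta_6^{\,a_i}(\pi_i-1)\qquad(i=1,\dots,m-1),\\
\pi_1&=\zeta_6^{\,a_m}(\pi_m-1)\quad\text{or}\quad\pi_1=\zeta_6^{\,a_m}\,\overline{(\pi_m-1)},
\end{align*}
with $a_i\in\mathbb{Z}/6$; that is, at most one conjugation is needed, and only where the cycle closes.

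Next I would compose these $m$ relations. In the non-conjugating case telescoping yields $(\zeta_6^{\,A}-1)\pi_1=\Sigma$ with $A=\sum_i a_i$ and $\Sigma$ an explicit signed sum of $m$ sixth roots of unity. If $\zeta_6^{A}\neq1$ this bounds $|\pi_1|$, hence every $|\pi_i|$ by cyclic symmetry, by $O(m)$, so all $q_i=O(m^2)$ and a finite search — using $t_i\equiv1\pmod6$, \cref{lem:sum-of-traces}, the Hasse bound, and distinctness of the $q_i$ — produces no cycle with $m\ge3$. If $\zeta_6^{A}=1$ then $\Sigma=0$, $\pi_1$ is unconstrained, and each $q_i$ is the value of the norm form at an explicit affine-linear function of $\pi_1$; here one reduces these norms modulo small primes. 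For instance $N(a+b\zeta_6)\equiv(a-b)^2\pmod3$, which already shows that among the norms $N(\pi_1)$, $N(\pi_1-1)$, $N(\pi_1-\zeta_6)$ one is always divisible by $3$ (so some $q_i$ would have to equal $3$, a tiny case); exploiting such covering congruences modulo $3$ eliminates every $m\ge3$ except $m=6$. The conjugating case is analogous: telescoping now gives an $R$-linear relation $\alpha\pi_1+\beta\bar\pi_1=\gamma$ with $|\alpha|=|\beta|=1$ — the degenerate shape responsible for the known infinite families of $2$-cycles — and one checks that for $m\ge3$ no choice of the $a_i$ lets its arithmetic-progression solution set produce distinct primes forming a genuine cycle of \emph{distinct} curves.

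Finally, $m=6$ does occur: an explicit $j$-invariant-$0$ curve over $\mathbb{Q}$ (see \cite{reciprocity}) reduces to a $6$-cycle with common discriminant $3$, so the bound is sharp. I expect the technical core to be the last two branches above — determining exactly which configurations of the $a_i$ survive the modulo-$3$ covering congruences, and, in the degenerate case, when the resulting progression of candidate Frobenii yields primes forming a valid cycle. This enumeration is precisely what is carried out in \cite[Theorem 3.4]{reciprocity}; the finitely-bounded non-degenerate branch, by contrast, reduces to a routine computation.
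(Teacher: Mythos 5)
First, a point of comparison: the paper does not prove this proposition at all --- it is imported verbatim from Theorem 3.4 of the cited reference, and the surrounding text only proves the complementary case $D>3$ (\cref{prop:cyclecm}). So there is no internal proof to measure your argument against; your proposal has to stand on its own.

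As a standalone argument it is a reasonable program but not a proof. The reduction to $D=3$ via the unit group, and the normalization $\pi_{i+1}=\zeta_6^{a_i}(\pi_i-1)$ with at most one conjugation at the closing step, are sound (the factorization step is valid even for non-maximal orders, since all Frobenii lie in $\mathbb{Z}[\zeta_6]$ and a split prime has only two prime ideals above it; and $q_i\equiv 1\pmod 3$ for $q_i>3$ is indeed forced by $4q_i-t_i^2=3y_i^2$). The gaps are in the three branches you then open. In the non-degenerate branch ($\zeta_6^{A}\neq 1$) you obtain $|\pi_1|\le|\Sigma|\le m$ and hence $q_i=O(m^2)$, but this bound grows with $m$ and $m$ is unbounded, so ``a finite search'' is not finite: you would be searching over all $m\ge 3$ and all $6^{m}$ twist patterns $(a_1,\dots,a_m)$, and neither the distinctness of the $q_i$ nor \cref{lem:sum-of-traces} caps $m$ here (there are far more than $m$ primes below $O(m^2)$, and $\sum_i t_i=m$ is compatible with $|t_i|\le 4m$). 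A uniform-in-$m$ argument is needed and is not supplied. In the degenerate branch, the covering congruence you exhibit is correct ($N(a+b\zeta_6)=a^2+ab+b^2\equiv(a-b)^2\pmod 3$), but the assertion that such congruences ``eliminate every $m\ge 3$ except $m=6$'' is precisely the content of the theorem, and you state it rather than establish it; the conjugating case is likewise only described. Since you explicitly defer this ``technical core'' to the cited reference, the proposal is an outline of where a proof would live rather than a proof. (A minor further point: exhibiting an actual $6$-cycle is not needed, since the proposition is an implication, not an existence claim.)
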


The results in this section show that to construct $m$-cycles of elliptic curves with a fixed discriminant $D$, either $m\leq 2$ or $m=6$ and $D=3$. This places a strong restriction on possible cycles, and implies that we cannot construct long cycles from a single family of elliptic curves with a fixed discriminant. For example, the Barreto--Naehrig curves \cite{bn} all have discriminant $D=3$.

We also note that the results in this section do not depend on the embedding degrees of the elliptic curves. It remains an open question to understand how restricting the embedding degrees places further restrictions on the possible cycles.

\section{Cycles with cofactors}
\label{section:cofactor}

Allowing for non-prime orders gives greater flexibility in constructing elliptic curves, while still having relevance to cryptographic applications. While there are few embedding degrees that can be achieved by current constructions of prime-order curves, there are methods that achieve \emph{arbitrary} embedding degrees for composite-order curves \cite{cp, dem}. While composite-order curves tend to be less preferable than prime-order curves in applications, they can still be practical and sometimes even preferable.%
\footnote{
For example, Barreto--Lynn-Scott curves \cite{bls} are composite-order curves that, thanks to their high embedding degrees, enable efficient implementations at high-security levels. As another example, Edwards curves \cite{Edwards07,BernsteinL07,BernsteinBJLP08} are composite-order curves that, thanks to their complete formulas for addition, enable efficient implementations that resist various side channels (e.g., \cite{BernsteinDLSY11}).
}

Nevertheless, we show in this section that allowing for non-prime orders does not give us greater flexibility in constructing cycles. Our arguments in this section rely only on the Hasse bound and the constraints on the orders of the elliptic curves posed by the cycle condition. 

\begin{definition}
\label{def:generalized-cycle}
An \emph{$m$-cycle of elliptic curves with cofactors} consists of $m$ distinct elliptic curves $E_1/\mathbb{F}_{q_1}$,$\ldots$, $E_m/\mathbb{F}_{q_m}$ such that for positive integer cofactors $h_1,\ldots,h_m$,
\begin{align}
\#E_1(\mathbb{F}_{q_1}) = h_1 q_2\,,\ldots,
\#E_i(\mathbb{F}_{q_i}) = h_i q_{i+1}\,,\ldots,
\#E_m(\mathbb{F}_{q_m}) = h_m q_1\,. \end{align}
\end{definition} 

If all the cofactors are 1, then \cref{def:generalized-cycle} reduces to \cref{def:cycle}. We show that, for any $m>1$, we cannot have $m$-cycles of elliptic curves with any nontrivial cofactor (and large orders). We deduce this by considering only the Hasse bound on the orders of the curves.

\begin{proposition}
For all $m>1$, there exists no $m$-cycle of elliptic curves having at least one nontrivial cofactor (greater than $1$), if $q_1,\ldots,q_m>12m^2$.
\end{proposition}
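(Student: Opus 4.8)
The plan is to use nothing beyond the Hasse bound and the cyclic constraints on the orders, and to reach a contradiction by multiplying all $m$ constraints together around the cycle so that the field sizes $q_i$ cancel out.

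First I would record, for each $i$ (indices taken mod $m$), that the cycle-with-cofactors condition of \cref{def:generalized-cycle} gives $h_i q_{i+1} = \#E_i(\mathbb{F}_{q_i}) = q_i + 1 - t_i$, where $t_i$ is the trace of $E_i$. Hasse's theorem gives $|t_i| \le 2\sqrt{q_i}$, so in particular
\[
h_i q_{i+1} \;=\; q_i + 1 - t_i \;\le\; q_i + 1 + 2\sqrt{q_i} \;=\; \big(\sqrt{q_i}+1\big)^2\,.
\]
(Only the upper Hasse bound is used; the lower bound plays no role.)

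Next I would take the product of these $m$ inequalities. Since $q_{i+1}$ ranges over all of $q_1,\dots,q_m$ as $i$ does, the product of the left-hand sides is $\big(\prod_{i=1}^m h_i\big)\big(\prod_{i=1}^m q_i\big)$, and dividing through by $\prod_{i=1}^m q_i > 0$ yields
\[
\prod_{i=1}^m h_i \;\le\; \prod_{i=1}^m \Big(1 + \tfrac{1}{\sqrt{q_i}}\Big)^{2}\,.
\]
This telescoping is the crux of the argument: it removes the $q_i$ entirely and leaves a bound on $\prod_i h_i$ alone. Finally, I would plug in the hypothesis $q_i > 12m^2$, so $\sqrt{q_i} > 2m\sqrt{3}$ and $1 + 1/\sqrt{q_i} < 1 + \tfrac{1}{2m\sqrt3}$. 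Using $(1+x)^m < e^{mx}$ for $x>0$, the right-hand side is strictly less than $\big(1 + \tfrac{1}{2m\sqrt3}\big)^{2m} < e^{1/\sqrt{3}}$, which is less than $2$ because $1/\sqrt{3} < \ln 2$. Hence $\prod_{i=1}^m h_i < 2$; since it is a product of positive integers, $\prod_{i=1}^m h_i = 1$, forcing every $h_i = 1$ and contradicting the assumption that at least one cofactor exceeds $1$.

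I do not expect a genuine obstacle here: once one decides to multiply the constraints around the cycle, the argument is elementary. The only points needing care are verifying that the stated constant (equivalently, that $e^{1/\sqrt3} < 2$) is large enough and keeping the indices cyclic; I would also remark that the bound is not tight, so a somewhat smaller threshold than $12m^2$ would suffice with a slightly more careful estimate.
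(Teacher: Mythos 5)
Your proof is correct, and it takes a genuinely different route from the paper's. The paper works additively with square roots: it iterates the inequality $\sqrt{h_i q_{i+1}} \le \sqrt{q_i} + 1$ around the cycle, dividing by $\sqrt{h_i}$ at each step and accumulating an additive error bounded by $m$, which yields $\sqrt{q_1}\bigl(1 - 1/\sqrt{h_1\cdots h_m}\bigr) \le m$ and hence $q_i \le (2+\sqrt{2})^2 m^2 < 12m^2$ whenever some $h_i \ge 2$ (this is where the constant $12$ comes from). You instead multiply the $m$ constraints $h_i q_{i+1} \le (\sqrt{q_i}+1)^2$ so that $\prod_i q_i$ cancels, leaving the clean statement $\prod_i h_i \le \prod_i (1+1/\sqrt{q_i})^2 < e^{1/\sqrt{3}} < 2$, and then invoke integrality to force all $h_i = 1$. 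Both arguments use only the upper Hasse bound and the cycle condition; yours is more symmetric (no distinguished starting index), isolates the cofactors entirely from the field sizes, and exploits the integrality of $\prod_i h_i$, which the paper's version does not need. Your closing remark is also right: your method only requires $\sum_i 2\ln(1+1/\sqrt{q_i}) < \ln 2$, which holds already for $q_i$ somewhat below $12m^2$, so the threshold in the statement is not optimal for your argument; the paper's constant is tailored to its own chain of inequalities. All the estimates you cite check out: $e^{1/\sqrt{3}} \approx 1.78 < 2$ since $1/\sqrt{3} < \ln 2$.
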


\begin{proof}
We first prove this for the simpler case where $m=2$. Suppose that we
have a 2-cycle of elliptic curves $E_1/\mathbb{F}_{q_1}, E_2/\mathbb{F}_{q_2}$ with cofactors such that $\#E_1(\mathbb{F}_{q_1})= h_1q_2$, $\#E_2(\mathbb{F}_{q_2}) = h_2q_1$. The Hasse bound for $E_1$ implies
\begin{align}
q_1 + 1 - 2\sqrt{q_1} \leq h_1 q_2 \leq q_1 + 1 + 2\sqrt{q_1}\,.
\end{align}
We can express this as $(\sqrt{q_1} -1)^2 \leq h_1 q_2 \leq (\sqrt{q_1}+1)^2$. Applying the same argument to $E_2$, we get the following two inequalities
\begin{align}
\sqrt{q_1} - 1 &\leq \sqrt{h_1 q_2} \leq \sqrt{q_1} + 1\,,\\
\sqrt{q_2} - 1 &\leq \sqrt{h_2 q_1} \leq \sqrt{q_2} + 1\,.
\end{align}
We can then bound $q_2$ as follows
\begin{align}
\sqrt{h_1 q_2} \leq \frac{1}{\sqrt{h_2}} (\sqrt{q_2} + 1) + 1\,.
\end{align}
If $h_1>1$ or $h_2>1$, this implies that
\begin{align}
     \sqrt{q_2} \leq \frac{\sqrt{h_2} + 1}{\sqrt{h_1h_2} - 1}
\leq 1 + \frac{2}{\sqrt{h_1h_2} -1} < 3\,.
\end{align}
The same argument applies for bounding $q_1$. Hence for any 2-cycle with nontrivial cofactors, the elliptic curves must have small orders. 

We now extend the argument above to $m$-cycles with cofactors, for all $m>2$. Suppose we have an $m$-cycle with cofactors $\#E_1(\mathbb{F}_{q_1}) = h_1 q_2\,,\#E_2(\mathbb{F}_{q_2}) = h_2q_3\,,\ldots,\#E_m(\mathbb{F}_{q_m}) = h_m q_1$. Applying the same argument as before, we have the inequalities
\begin{subequations}
\begin{align}
\sqrt{h_m q_1} &\leq \sqrt{q_m} + 1 \\
& \leq \frac{1}{\sqrt{h_{m-1}}} (\sqrt{q_{m-1}} + 1) + 1 \nonumber \\
& \leq \frac{1}{\sqrt{h_{m-1} h_{m-2}}} \left(\sqrt{q_{m-2}} + 1 \right) + \left( 1 + \frac{1}{\sqrt{h_{m-1}}}
  \right) \nonumber \\
& \vdots \nonumber\\
& \leq \frac{1}{\sqrt{h_{m-1} \cdots h_1}} \left(\sqrt{q_1} + 1\right) +
    \left( 1 + \frac{1}{\sqrt{h_{m-1}}} + \cdots +
    \frac{1}{\sqrt{h_{m-1}\cdots h_2}}\right)\,.
\end{align}
\end{subequations}
We simplify this to
\begin{align}
\sqrt{q_1} \left(1-\frac{1}{\sqrt{h_m\cdots h_1}}\right) &\leq
\frac{1}{\sqrt{h_m}} + \frac{1}{\sqrt{h_m h_{m-1}}} + \cdots +
\frac{1}{\sqrt{h_m\cdots h_1}} \,.
\end{align}
If at least one of $h_1,\ldots, h_m$ is greater than 1, then we can bound $q_1$ as follows.
\begin{align}
     \sqrt{q_1} \leq \frac{m}{1-\frac{1}{\sqrt{h_m\cdots h_1}}}
\leq \frac{m}{1-\frac{1}{\sqrt{2}}} = (2+\sqrt{2})m \,.
\end{align}
The above argument applies for $q_2,\ldots,q_m$, hence $q_i \leq (2+\sqrt{2})^2 m^2 < 12m^2$ for each $i$. For cryptographic applications, we would require the elliptic curves to be defined over much larger fields than the size of the cycle, contrary to this bound. 
\end{proof}

\section{Other cycles on parametrized families}
\label{section:other}

We have shown in \cref{section:cofactor} that it is not possible to construct cycles of elliptic curves with nontrivial cofactors (and large orders relative to cycle length). Hence cycles of elliptic curves must be assembled from \emph{prime}-order elliptic curves. At present the only known families of \emph{pairing-friendly} prime-order elliptic curves are the MNT curves for $k=3,4,6$, Freeman curves for $k=10$ \cite{freeman10}, and Barreto--Naehrig curves for $k=12$ \cite{bn}. Now we prove that we cannot construct cycles from just Freeman curves or from just Barreto--Naehrig curves.

\begin{proposition}
There do not exist cycles consisting only of Freeman curves.
\end{proposition}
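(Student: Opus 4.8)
The plan is to contradict the trace identity of \cref{lem:sum-of-traces} using the explicit shape of the Freeman polynomials in \cref{eqn:freeman}. Suppose toward a contradiction that $E_1/\mathbb{F}_{q_1},\dots,E_m/\mathbb{F}_{q_m}$ is an $m$-cycle in which every $E_i$ is a Freeman curve. Then for each $i$ there is an integer $x_i$ with $q_i=q_{10}(x_i)$ and $\#E_i(\mathbb{F}_{q_i})=n_{10}(x_i)$, and since the trace of $E_i$ is $t_i=q_i+1-\#E_i(\mathbb{F}_{q_i})$ while the parametrization satisfies $n_{10}=q_{10}+1-t_{10}$, we get $t_i=t_{10}(x_i)=10x_i^2+5x_i+3$ (this also pins down the sign convention: the trace is $+t_{10}(x)$, not $-t_{10}(x)$).

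The one real step is the elementary observation that $t_{10}$ is bounded below by a constant exceeding $1$ on the integers. Writing $t_{10}(x)=5x(2x+1)+3$ and splitting into the cases $x\ge 0$ and $x\le -1$ shows $5x(2x+1)\ge 0$ for every $x\in\mathbb{Z}$, hence $t_i\ge 3$ for all $i$. Summing around the cycle gives $t_1+\dots+t_m\ge 3m$, whereas \cref{lem:sum-of-traces} forces $t_1+\dots+t_m=m$; since $3m>m$ for $m\ge 1$, this is a contradiction, completing the proof. Note that this argument uses no case analysis on $m$, no primality assumptions on the $q_i$, and nothing about the embedding degree $k=10$.

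There is no substantive obstacle here; the only things needing care are the trace sign convention (handled above via \cref{eqn:freeman}) and the trivial nonnegativity of $5x(2x+1)$ on $\mathbb{Z}$. An equivalent phrasing that does not even invoke \cref{lem:sum-of-traces} is to note that $q_{10}(x)-n_{10}(x)=t_{10}(x)-1=10x^2+5x+2>0$ for all real $x$ (its discriminant $25-80$ is negative), so $q_{i+1}=n_{10}(x_i)<q_{10}(x_i)=q_i$ for every $i$, which would make the sequence $q_1,q_2,\dots,q_m,q_1$ strictly decreasing — impossible. I would include whichever of these two versions reads most cleanly alongside the later Barreto--Naehrig argument, since $q_{12}(x)-n_{12}(x)=6x^2$ and $t_{12}(x)=6x^2+1$ admit exactly the same treatment.
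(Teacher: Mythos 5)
Your proof is correct and follows essentially the same route as the paper: both invoke \cref{lem:sum-of-traces} and then observe that $t_{10}(x)=10x^2+5x+3$ is bounded below by a constant exceeding $1$ (the paper notes the discriminant of $t(x)-1$ is $-55$, so $t(x)>1$ on all of $\mathbb{R}$; you note $5x(2x+1)\ge 0$ on $\mathbb{Z}$, so $t\ge 3$). The differences, including your alternative phrasing via the strictly decreasing sequence of field sizes, are cosmetic.
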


\begin{proof}
\cref{lem:sum-of-traces} poses a restriction on the sum of the traces in a cycle. The trace of the Freeman curves is parametrized by $t(x) = 10x^2 + 5x + 3$ (see \cref{eqn:freeman}). We note that $t(x)>1$ for all $x\in\mathbb{R}$, since the discriminant of $t(x)-1$ is $-55$. Hence the condition in \cref{lem:sum-of-traces} cannot be satisfied for cycles consisting only of Freeman curves. 
\end{proof}

\begin{proposition}
There do not exist cycles consisting only of Barreto--Naehrig curves.
\end{proposition}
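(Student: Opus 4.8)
The plan is to argue exactly as in the Freeman case, using the trace constraint of \cref{lem:sum-of-traces}. Suppose we have an $m$-cycle $E_1/\mathbb{F}_{q_1},\ldots,E_m/\mathbb{F}_{q_m}$ in which every $E_i$ is a Barreto--Naehrig curve, say $E_i$ corresponds to the integer parameter $x_i$, so that $q_i = q_{12}(x_i)$, $n_i = n_{12}(x_i)$, and the trace is $t_i = t_{12}(x_i) = 6x_i^2 + 1$ (see \cref{eqn:bn}). By \cref{lem:sum-of-traces} the traces must sum to $m$:
\begin{equation}
\sum_{i=1}^m (6x_i^2 + 1) = 6\sum_{i=1}^m x_i^2 + m = m\,,
\end{equation}
which forces $\sum_{i=1}^m x_i^2 = 0$, i.e. $x_i = 0$ for every $i$.

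The second and final step is to observe that this is impossible because $q_{12}(0) = 1$ is not prime, contradicting the requirement in \cref{def:cycle1} that each field of definition $q_i$ be prime (one could equally note $n_{12}(0) = 1$ is not prime). Hence no such cycle can exist.

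I expect no real obstacle here. The only point worth flagging, and the sole place the argument differs from the Freeman proposition, is that the BN trace polynomial $6x^2+1$ attains the value $1$ (at $x=0$) rather than being strictly greater than $1$ for all real arguments; so instead of the trace bound being violated outright, it is met with equality only in a degenerate case, which is then ruled out by primality of $q_{12}(0)$. This is a one-line addendum rather than a genuine difficulty.
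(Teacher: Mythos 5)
Your proposal is correct and matches the paper's own argument: both use \cref{lem:sum-of-traces} to force every trace $6x_i^2+1$ to equal $1$, hence $x_i=0$, and then rule this out because $q_{12}(0)=n_{12}(0)=1$ is not prime. No issues.
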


\begin{proof}
We again use \cref{lem:sum-of-traces}. The trace of the Barreto--Naehrig curves is parametrized by $t(x) = 6x^2+1$ (see \cref{eqn:bn}), hence if we have a family of elliptic curves consisting only of Barreto--Naehrig curves, then each trace has to be $1$. So $x=0$ and $q(x)=n(x)=1$ for every curve in the cycle, which is impossible since $q(x)$ and $n(x)$ have to be prime.
\end{proof}

We remark that the proof of \cref{lem:mnt-k-3} also shows that there do not exist cycles consisting of just Barreto--Naehrig curves and MNT curves of embedding degree 3.

For combinations of MNT, Freeman, and Barreto--Naehrig curves, we did a preliminary investigation using Gr\"obner bases to find solutions to the following system of polynomial equations in $m$ variables $x_1,\ldots,x_m$, where $k_1,\ldots,k_m\in\{3,4,6,10,12\}$.
\begin{align}
n_{k_1}(x_1) = q_{k_2}(x_2)\,,
n_{k_2}(x_2) = q_{k_3}(x_3),
\ldots,
n_{k_m}(x_m) = q_{k_1}(x_1)\,.
\end{align}
For $m\leq 4$ we found that the ideals generated by these polynomials have dimension $0$ apart from the MNT cycles in \cref{prop:mnt-4-6-cycles}, implying that we cannot construct other families of cycles of length up to $4$. We leave it as an open problem to construct cycles from combinations of these families, or to show that they do not exist.

\appendix
\section{Dual Elliptic Primes}
\label{appendix:dualelliptic}

In \cite{preda, dualelliptic}, dual elliptic primes were introduced for applications in primality proving. 

\begin{definition}[{\cite[Definition 10]{dualelliptic}}]
Two primes $p,q$ are \emph{dual elliptic primes associated to an order $\mathcal{O}\subseteq \mathbb{Q}(\sqrt{-D})$} if there is a prime $\pi\in\mathcal{O}$ such that $p=\pi\overline{\pi}$ and $q=(\pi+\varepsilon)\overline{(\pi+\varepsilon)}$ with $\varepsilon = \pm 1$. 
\end{definition}

Dual elliptic primes are equivalent to 2-cycles of ordinary elliptic curves. 

\begin{proposition}
Let $p,q$ be dual elliptic primes associated to an order
$\mathcal{O}\subseteq\mathbb{Q}(\sqrt{-D})$. Then $p,q$ correspond
bijectively to a 2-cycle of ordinary elliptic curves $E_1/\mathbb{F}_p, E_2/\mathbb{F}_q$ with complex multiplication by $\mathcal{O}$.
\end{proposition}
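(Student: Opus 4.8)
The plan is to set up the dictionary between the arithmetic data of dual elliptic primes and the point counts of elliptic curves with CM by $\mathcal{O}$, and then check that the ``$\pm 1$ shift'' in the definition of dual elliptic primes is exactly the cycle condition of \cref{def:cycle1}. First I would recall that if $E/\mathbb{F}_p$ is an ordinary elliptic curve with CM by an order $\mathcal{O}\subseteq\mathbb{Q}(\sqrt{-D})$, then the Frobenius endomorphism $\varphi$ is an element $\pi\in\mathcal{O}$ with $\pi\overline{\pi} = p$ (i.e. $N(\pi) = p$), and $\#E(\mathbb{F}_p) = p + 1 - t$ where $t = \pi + \overline{\pi}$. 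Conversely, by the theory of complex multiplication (Deuring), for a prime $\pi\in\mathcal{O}$ with $N(\pi) = p$ there is an ordinary elliptic curve $E/\mathbb{F}_p$ with CM by $\mathcal{O}$ whose Frobenius is $\pi$; the curve is determined up to isomorphism and up to the choice of $\pi$ versus $\overline{\pi}$ (which corresponds to a quadratic twist), so the pair $\{\pi,\overline{\pi}\}$ determines $E$ up to twist. This is the bijective correspondence I will invoke.

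Next I would do the computation that links the shift $\pi\mapsto\pi+\varepsilon$ to the cycle condition. Given dual elliptic primes $p = \pi\overline{\pi}$ and $q = (\pi+\varepsilon)(\overline{\pi+\varepsilon})$ with $\varepsilon = \pm 1$, expand
\begin{equation}
q = (\pi + \varepsilon)(\overline{\pi} + \varepsilon) = \pi\overline{\pi} + \varepsilon(\pi + \overline{\pi}) + \varepsilon^2 = p + \varepsilon t + 1,
\end{equation}
where $t = \pi + \overline{\pi}$ is the trace of the curve $E_1/\mathbb{F}_p$ with Frobenius $\pi$. If $\varepsilon = -1$ this reads $q = p + 1 - t = \#E_1(\mathbb{F}_p)$, which is exactly the condition $\#E_1(\mathbb{F}_p) = q$. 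If $\varepsilon = +1$, then replacing $\pi$ by $-\pi$ (equivalently, $\pi+1$ by $-(\pi+1)+2$... more cleanly: the element $-\pi$ also has norm $p$, so we may take the curve $E_1$ with Frobenius $-\pi$, which has trace $-t$, and then $q = p + 1 - (-t)$) we again land in the case of the cycle condition. So without loss of generality $\varepsilon = -1$ after an appropriate choice of twist, and we get $\#E_1(\mathbb{F}_p) = q$. Now apply the same expansion in the other direction: $\pi + \varepsilon$ has norm $q$, and $\pi = (\pi + \varepsilon) - \varepsilon = (\pi+\varepsilon) + \varepsilon'$ with $\varepsilon' = -\varepsilon = +1$... so symmetrically, $p = q + 1 - t'$ where $t' = (\pi+\varepsilon) + \overline{(\pi+\varepsilon)} = t + 2\varepsilon = t - 2$, and $t'$ is the trace of the curve $E_2/\mathbb{F}_q$ with Frobenius $\pi+\varepsilon$. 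This gives $\#E_2(\mathbb{F}_q) = q + 1 - t' = p$, which is the second cycle condition. Hence $(E_1/\mathbb{F}_p, E_2/\mathbb{F}_q)$ is a $2$-cycle of ordinary elliptic curves, and both have CM by $\mathcal{O}$ since their Frobenii $\pi$ and $\pi+\varepsilon$ lie in $\mathcal{O}$ and generate it (one should note $\mathbb{Z}[\pi] = \mathbb{Z}[\pi+\varepsilon]$, so they are CM by the same order, namely $\mathcal{O}$ if $\mathcal{O} = \mathbb{Z}[\pi]$; in general the order generated by Frobenius sits inside $\mathcal{O}$, and the statement as phrased takes $\mathcal{O}$ to be that order).

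For the reverse direction and the word ``bijectively,'' I would start from a $2$-cycle $E_1/\mathbb{F}_p, E_2/\mathbb{F}_q$ of ordinary curves with CM by $\mathcal{O}$ (by \cref{prop:2cyclecm} they automatically share a discriminant, so this is not an extra hypothesis), let $\pi$ be the Frobenius of $E_1$ so $N(\pi) = p$ and $t_1 = \pi + \overline{\pi}$, and use $q = \#E_1(\mathbb{F}_p) = p + 1 - t_1$ together with the reverse of the expansion above to identify $q$ with $N(\pi - 1)$ or $N(\pi + 1)$ depending on signs; the Frobenius of $E_2$ is then $\pi + \varepsilon$ up to conjugation, so $p$ and $q$ are dual elliptic primes. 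The correspondence is a bijection once one fixes the normalization of twists (i.e. one works with $\{\pi, \overline{\pi}\}$ rather than $\pi$), which is the standard caveat in such CM correspondences.

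I expect the only real subtlety — the ``main obstacle'' — to be bookkeeping around twists and signs: the sign of $\varepsilon$, the choice of $\pi$ versus $-\pi$ versus $\overline{\pi}$, and correspondingly which quadratic twist of the curve one picks, together with making precise the sense in which the correspondence is ``bijective.'' The underlying arithmetic (the two-line norm expansion) is routine; what needs care is stating the twist normalization cleanly so that each dual elliptic pair gives exactly one $2$-cycle and conversely, and checking that the order $\mathcal{O} = \mathbb{Z}[\pi]$ is preserved under the shift $\pi \mapsto \pi + \varepsilon$, which is immediate since $\mathbb{Z}[\pi] = \mathbb{Z}[\pi \pm 1]$.
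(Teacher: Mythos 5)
Your overall strategy is the same as the paper's: expand the norm $q=(\pi+\varepsilon)\overline{(\pi+\varepsilon)}=p+\varepsilon(\pi+\overline{\pi})+1$, read off the traces, and for the converse recover the Frobenius elements $\lambda_1,\lambda_2$ with $\lambda_2=1-\lambda_1$ from the CM equations. However, the forward direction as you wrote it contains a sign error that breaks the verification of the second cycle condition. Having normalized so that $E_1$ has trace $t$ and $q=p+1-t$ (your case $\varepsilon=-1$), you assign $E_2$ the trace $t'=(\pi+\varepsilon)+\overline{(\pi+\varepsilon)}=t-2$ and assert $q+1-t'=p$. But $q+1-(t-2)=(p+1-t)+3-t=p+4-2t$, which equals $p$ only when $t=2$. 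The trace you need is $t_2=q+1-p=2-t$, i.e.\ the \emph{negative} of $(\pi+\varepsilon)+\overline{(\pi+\varepsilon)}$: the curve $E_2$ must be the quadratic twist corresponding to Frobenius $-(\pi+\varepsilon)$, not $\pi+\varepsilon$. The paper builds this in by defining $t_1=-\varepsilon(\pi+\overline{\pi})$ and $t_2=\varepsilon\bigl((\pi+\varepsilon)+\overline{(\pi+\varepsilon)}\bigr)=-t_1+2$, so that both cycle equations hold for every choice of $\varepsilon$ without case analysis. You correctly identified twist/sign bookkeeping as the main obstacle, but the write-up then falls into exactly that trap; the fix is the single extra factor of $\varepsilon$ in front of the trace of $E_2$.

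The remainder is fine and matches the paper: the curves with the prescribed traces exist by Deuring's theorem, they have CM by $\mathcal{O}$ because $p$ and $q$ are norms of principal $\mathcal{O}$-ideals (equivalently $\mathbb{Z}[\pi]=\mathbb{Z}[\pi\pm 1]$), and the converse proceeds by choosing the roots $\lambda_1=(t_1+y\sqrt{-D})/2$ and $\lambda_2=(t_2-y\sqrt{-D})/2$ with opposite signs on $y\sqrt{-D}$ so that $t_2=2-t_1$ forces $\lambda_2=1-\lambda_1$, whence $\pi=-\varepsilon\lambda_1$ exhibits $p,q$ as dual elliptic primes. Your converse sketch is vaguer than the paper's (you say ``up to conjugation'' where the paper pins down the sign of $y$), but it is the same argument.
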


\begin{proof}
Let $p,q$ be dual elliptic primes. Then
\begin{align}
q = (\pi+\varepsilon)\overline{(\pi+\varepsilon)} =
p + \varepsilon(\pi+\overline{\pi}) + 1\,.
\end{align}
Let $t_1 = -\varepsilon(\pi+\overline{\pi})$ and $t_2
= \varepsilon((\pi+\varepsilon)+\overline{(\pi+\varepsilon)})$. Then
$q = p-t_1 +1$ and $t_2 = -t_1 +2$, so $p = q+1-t_2$. Thus the
elliptic curves $E_1/\mathbb{F}_p, E_2/\mathbb{F}_q$ with traces
$t_1,t_2$ respectively form a 2-cycle. Moreover, since $p,q$ are the norms of principal $\mathcal{O}$-ideals, these elliptic curves have complex multiplication by $\mathcal{O}$.

Conversely, let $E_1/\mathbb{F}_p, E_2/\mathbb{F}_q$ be a 2-cycle of ordinary elliptic curves with complex multiplication by $\mathcal{O}$. We can write the CM equations as $4p - t_1^2 = y^2D$ and $4q - t_2^2 = y^2D$. Let $\lambda_1$ be a root of $x^2-t_1x+p$, and let $\lambda_2$ be a root of $x^2 - t_2x+q$, chosen such that
\begin{align}
\lambda_1 &= \frac{t_1+
  y\sqrt{-D}}{2}\,,\\
\lambda_2 &= \frac{t_2- y\sqrt{-D}}{2}\,.
\end{align}

Then $p,q$ are the norms of the principal $\mathcal{O}$-ideals $(\lambda_1), (\lambda_2)$ respectively. Since $t_2 = 2-t_1$, we have $\lambda_2 = 1-\lambda_1$. Let $\varepsilon = \pm 1$ and $\pi = -\varepsilon\lambda_1$. Then $p = \lambda_1 \overline{\lambda_1} = \pi \overline{\pi}$, and $q=\lambda_2 \overline{\lambda_2} = (\pi+\varepsilon)\overline{(\pi+\varepsilon)}$.
\end{proof}

\newpage

\begin{table}
\caption{Example of an MNT $(6,4)$-cycle.}
\label{table:mnt-2-cycle-points}
\footnotesize
\begin{center} 
\begin{tabular}{|c|c|c|c|}
\hline
& $E_1$ & $E_2$ \\ \hline
& $y^2 = x^3 + 4x + 2$ & $y^2 = x^3 + 2x^2 + 1$   \\ \hline
$(q,n,t,k,D)$ & $(5,3,3,6,11)$ & $(3,5,-1,4,11)$   \\ \hline
\begin{minipage}{2.5cm} \centering
points (excluding point at infinity)
\end{minipage} & 
\begin{minipage}{2cm} \centering
(3,1)\\ (3,4)
\end{minipage} &
\begin{minipage}{2cm} \centering
(0,1)\\ (0,2)\\ (1,1)\\ (1,2)
\end{minipage} \\ \hline
\end{tabular}
\end{center}
\end{table}

\begin{table}
\caption{Example of an MNT $(6,4,6,4)$-cycle.}
\label{table:mnt-4-cycle-points}
\footnotesize
\begin{center} 
\begin{tabular}{|c|c|c|c|c|c|}
\hline
 & $E_1$ & $E_2$ & $E_3$ & $E_4$ \\ \hline
&  $y^2 = x^3 + 24x + 16$ & $y^2 = x^3 + 36x + 5$ & $y^2 = x^3 + 22x + 27$ & $y^2 = x^3 + 26x +
21$  \\ \hline
$(q,n,t,k,D)$ &  $(37,43,-5,6,123)$ & $(43,37,7,4,123)$ & $(37,31,7,6,11)$ & $(31,37,-5,4,11)$ \\ \hline
\begin{minipage}{2.1cm} \centering
points (excluding point at infinity)
\end{minipage}
& 
 \begin{minipage}{1cm} \centering
(0,4)\\ (0,33)\\ (1,2)\\ (1,35)\\ (3,2)\\ (3,35)\\
(4,18)\\ (4,19)\\ (7,3)\\ (7,34)\\ (9,6)\\ (9,31)\\ (12,16)\\
(12,21)\\ (13,3)\\ (13,34)\\ (14,5)\\ (14,32)\\ (17,3)\\ (17,34)\\(18,8)\end{minipage}
\begin{minipage}{1cm} \centering
(18,29)\\ (23,9)\\ (23,28)\\ (26,7)\\ (26,30)\\ (27,16)\\ (27,21)\\ (28,12)\\ (28,25)\\ (31,10)\\ (31,27)\\ (32,17)\\ (32,20)\\ (33,2)\\ (33,35)\\ (34,18)\\ (34,19)\\ (35,16)\\ (35,21)\\ (36,18)\\ (36,19)
\end{minipage} &
\begin{minipage}{1cm} \centering
(3,21)\\ (3,22)\\ (4,16)\\ (4,27)\\ (5,3)\\ (5,40)\\
(7,16)\\ (7,27)\\ (8,17)\\ (8,26)\\ (12,12)\\ (12,31)\\ (13,2)\\
(13,41)\\ (18,11)\\ (18,32)\\ (19,18)\\ (19,25)\end{minipage}
\begin{minipage}{1cm} \centering
(23,10)\\ (23,33)\\(29,5)\\ (29,38)\\ (30,7)\\ (30,36)\\ (31,9)\\ (31,34)\\ (32,16)\\ (32,27)\\ (33,8)\\ (33,35)\\ (38,1)\\ (38,42)\\ (41,21)\\ (41,22)\\ (42,21)\\ (42,22)\end{minipage}
& \begin{minipage}{1cm} \centering
(0,8)\\ (0,29)\\ (3,3)\\ (3,34)\\ (5,15)\\ (5,22)\\
(8,7)\\ (8,30)\\ (10,10)\\ (10,27)\\ (11,3)\\ (11,34)\\ (12,13)\\
(12,24)\\ (23,3)\end{minipage}
\begin{minipage}{1cm} \centering
(23,34)\\ (25,12)\\ (25,25)\\ (27,18)\\ (27,19)\\ (28,5)\\ (28,32)\\ (30,14)\\ (30,23)\\ (31,7)\\ (31,30)\\ (35,7)\\ (35,30)\\ (36,2)\\ (36,35)
\end{minipage}
& \begin{minipage}{1cm} \centering
(2,9)\\ (2,22)\\ (3,8)\\ (3,23)\\ (5,11)\\ (5,20)\\
(7,9)\\ (7,22)\\ (8,11)\\ (8,20)\\ (10,14)\\ (10,17)\\ (13,13)\\
(13,18)\\ (15,2)\\ (15,29)\\ (16,10)\\ (16,21)\end{minipage}
\begin{minipage}{1cm} \centering
(18,11)\\(18,20)\\ (20,4)\\ (20,27)\\ (21,1)\\ (21,30)\\ (22,9)\\ (22,22)\\ (23,13)\\ (23,18)\\ (26,13)\\ (26,18)\\ (27,15)\\ (27,16)\\ (28,3)\\ (28,28)\\ (30,5)\\ (30,26)
\end{minipage} \\ \hline
\end{tabular}
\end{center}
\end{table}

\ifsiaga\else
\clearpage
\fi
\section*{Acknowledgements}

We thank Bernd Sturmfels for helpful comments. We also thank
Fran\c{c}ois Morain for informing us about the application of cycles
of elliptic curves to primality proving. We thank Pierre Houedry for
pointing out an error in a previous version of our paper. This work was supported by the UC Berkeley Center for Long-Term Cybersecurity and a UC Berkeley University Fellowship.

\ifsiaga
\bibliographystyle{siamplain}
\else
\bibliographystyle{alpha}
\fi
\bibliography{ref}
\end{document}